\newtheorem{theorem}{Theorem}[section]
\newtheorem{proposition}[theorem]{Proposition}
\newtheorem{lemma}[theorem]{Lemma}
\newtheorem{corollary}[theorem]{Corollary}
\theoremstyle{definition}
\newtheorem{definition}[theorem]{Definition}
\theoremstyle{remark}
\newtheorem{remark}[theorem]{Remark}
\numberwithin{equation}{section}
\begin{document}

\title{Modified Jacobi forms of index zero}

\author{Ja Kyung Koo}
\address{Department of Mathematical Sciences, KAIST}
\curraddr{Daejeon 373-1, Korea} \email{jkkoo@math.kaist.ac.kr}
\thanks{}

\author{Dong Hwa Shin}
\address{Department of Mathematical Sciences, KAIST}
\curraddr{Daejeon 373-1, Korea} \email{shakur01@kaist.ac.kr}
\thanks{}

\subjclass[2010]{11F11, 11F50}

\keywords{Jacobi forms, Klein forms, modular forms.
\newline The first author was partially supported by Basic Science Research Program through
the NRF of Korea funded by MEST (2010-0001654).}

\begin{abstract}
By modifying a slash operator of index zero we define
\textit{modified Jacobi forms} of \textit{index zero}. Such forms
play a role of generating nearly holomorphic modular forms of
integral weight. Furthermore, by observing a relation between the
coefficients of Fourier development of a modified Jacobi form we
construct a family of finite-dimensional subspaces.
\end{abstract}

\maketitle

\section {Introduction}

Let $\mathfrak{H}$ denote the complex upper half-plane
$\{\tau\in\mathbb{C}:\mathrm{Im}(\tau)>0\}$. The letters $\tau$ and
$z$ will always stand for variables in $\mathfrak{H}$ and
$\mathbb{C}$, respectively. For fixed integers $k$ and $m$ ($\geq0$)
we define two slash operators on a function
$\phi:\mathfrak{H}\times\mathbb{C}\rightarrow\mathbb{C}$ as
\begin{eqnarray}
\bigg(\phi|_{k,m}\left[\begin{matrix}a&b\\c&d\end{matrix}\right]\bigg)(\tau,z)&:=&(c\tau+d)^{-k}
e^m\bigg(\frac{-cz^2}{c\tau+d}\bigg)\phi\bigg(\frac{a\tau+b}{c\tau+d},\frac{z}{c\tau+d}\bigg)
\quad\bigg(\begin{pmatrix}a&b\\c&d\end{pmatrix}\in\mathrm{SL}_2(\mathbb{R})\bigg)\nonumber\\
(\phi|_m\left[\begin{matrix}\lambda &
\mu\end{matrix}\right])(\tau,z)&:=&e^m(\lambda^2\tau+2\lambda
z+\lambda\mu)\phi(\tau,z+\lambda\tau+\mu)\quad((\begin{matrix}\lambda&\mu\end{matrix})\in\mathbb{R}^2)\label{secondslash}
\end{eqnarray}
where $e^m(x):=\exp(2\pi imx)$ (and $e(x):=e^1(x)$). Then we have
the relations
\begin{eqnarray}
(\phi|_{k,m}M)|_{k,m}M'&=&\phi|_{k,m}(MM')\quad(M,~M'\in\mathrm{SL}_2(\mathbb{R}))\nonumber\\
(\phi|_m X)|_m
X'&=&e^m\bigg(\det\begin{pmatrix}X\\X'\end{pmatrix}\bigg)\phi|_m(X+X')\quad(X,~X'\in\mathbb{R}^2)\nonumber\\
(\phi|_{k,m}M)|_m(XM)&=&(\phi|_mX)|_{k,m}M\quad(M\in\mathrm{SL}_2(\mathbb{R}),~X\in\mathbb{R}^2).\label{slash3}
\end{eqnarray}
(\cite{E-Z} $\S$1). From now on, we let
$\Gamma_1:=\mathrm{SL}_2(\mathbb{Z})$ throughout this paper. A
\textit{Jacobi form} of \textit{weight} $k$ and \textit{index} $m$
on a subgroup $\Gamma\subset\Gamma_1$ of finite index is a
holomorphic function
$\phi:\mathfrak{H}\times\mathbb{C}\rightarrow\mathbb{C}$ satisfying
\begin{itemize}
\item[(i)] $\phi|_{k,m}M=\phi$ for all $M\in\Gamma$,
\item[(ii)] $\phi|_m X=\phi$ for all $X\in\mathbb{Z}^2$,
\item[(iii)] for each $M\in\Gamma_1$, $\phi|_{k,m}M$ has a Fourier
development of the form
\begin{equation*}
\sum_{\begin{smallmatrix}n,r\in\mathbb{Z}\\
r^2\leq 4nm\end{smallmatrix}
}c(n,r)q^n\zeta^r\quad(q:=e(\tau),~\zeta:=e(z)).
\end{equation*}
\end{itemize}
The $\mathbb{C}$-vector space of all such functions $\phi$ is
denoted by $J_{k,m}(\Gamma)$. Then, as is well-known,
$\sum_{k,m}J_{k,m}(\Gamma)$ forms a bigraded ring (\cite{E-Z}
Theorem 1.5). And, Eichler-Zagier further developed the following
theorems.

\begin{theorem}[\cite{E-Z} Theorem 1.3]\label{EZtheorem}
Let $\phi\in J_{k,m}(\Gamma)$ and
$(\begin{matrix}\lambda&\mu\end{matrix})\in\mathbb{Q}^2$. Then the
function
\begin{equation*}
(\phi|_m\left[\begin{matrix}\lambda&\mu\end{matrix}\right])(\tau,0)=e^m(\lambda^2\tau)\phi(\tau,\lambda\tau+\mu)
\end{equation*}
is a modular form of weight $k$ on some subgroup of $\Gamma$ of
finite index depending only on $\Gamma$ and
$(\begin{matrix}\lambda&\mu\end{matrix})$ (in the sense of
\cite{Koblitz}).
\end{theorem}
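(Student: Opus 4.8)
The plan is to prove that the function
$f(\tau):=(\phi|_m[\begin{matrix}\lambda&\mu\end{matrix}])(\tau,0)=e^m(\lambda^2\tau)\phi(\tau,\lambda\tau+\mu)$
is a holomorphic modular form of weight $k$ on the group $\Gamma':=\Gamma\cap\Gamma(N^2)$, where $N\geq1$ is chosen so that $N\lambda,N\mu\in\mathbb{Z}$ and $\Gamma(N^2):=\{M\in\Gamma_1:M\equiv I\pmod{N^2}\}$. This $\Gamma'$ has finite index in $\Gamma$ and depends only on $\Gamma$ and $(\begin{matrix}\lambda&\mu\end{matrix})$, as required. Holomorphy of $f$ on $\mathfrak{H}$ is immediate, since $e^m(\lambda^2\tau)$ is entire and nowhere zero while $\tau\mapsto\phi(\tau,\lambda\tau+\mu)$ is a composition of holomorphic maps. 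One may also assume $m>0$: if $m=0$, condition (ii) forces $\phi(\tau,z+\lambda\tau+\mu)=\phi(\tau,z)$ for all $(\begin{matrix}\lambda&\mu\end{matrix})\in\mathbb{Z}^2$, so $\phi$ is independent of $z$ and, by (i) and (iii), is itself a modular form of weight $k$ on $\Gamma$, with $f=\phi$.

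For the automorphy, write $X:=(\begin{matrix}\lambda&\mu\end{matrix})$ and fix $M=\begin{pmatrix}a&b\\c&d\end{pmatrix}\in\Gamma'$. First I would combine the identity $(\phi|_{k,m}M)|_m(XM)=(\phi|_mX)|_{k,m}M$ of (\ref{slash3}) with condition (i), $\phi|_{k,m}M=\phi$, to get $(\phi|_mX)|_{k,m}M=\phi|_m(XM)$. Because $M\equiv I\pmod{N^2}$ and $N\lambda,N\mu\in\mathbb{Z}$, the vector $X':=XM-X$ lies in $\mathbb{Z}^2$, so condition (ii) gives $\phi|_mX'=\phi$; then the composition law for $|_m$ in (\ref{slash3}) yields $\phi|_m(XM)=\phi|_m(X+X')=e^m\bigl(\pm\det\begin{pmatrix}X\\X'\end{pmatrix}\bigr)\phi|_mX$. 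A short computation gives $\det\begin{pmatrix}X\\X'\end{pmatrix}=\lambda^2b-\mu^2c+\lambda\mu(d-a)$, which is an integer because $b\equiv c\equiv0$ and $a\equiv d\pmod{N^2}$ while $N\lambda,N\mu\in\mathbb{Z}$; hence the exponential factor equals $1$ and $(\phi|_mX)|_{k,m}M=\phi|_mX$. Evaluating both sides at $z=0$ turns this into $(c\tau+d)^{-k}f(M\tau)=f(\tau)$, that is, $f|_kM=f$ for every $M\in\Gamma'$.

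It then remains to check that $f$ is holomorphic at the cusps of $\Gamma'$. Every cusp is $\alpha\infty$ for some $\alpha\in\Gamma_1$, and the same identity from (\ref{slash3}) gives $(\phi|_mX)|_{k,m}\alpha=(\phi|_{k,m}\alpha)|_m(X\alpha)$, so evaluating at $z=0$ gives $(f|_k\alpha)(\tau)=\bigl((\phi|_{k,m}\alpha)|_m(X\alpha)\bigr)(\tau,0)$. I would then insert the Fourier development $\phi|_{k,m}\alpha=\sum_{r^2\leq4nm}c(n,r)q^n\zeta^r$ provided by condition (iii) into the definition (\ref{secondslash}) of $|_m$; writing $X\alpha=(\begin{matrix}\lambda'&\mu'\end{matrix})$ this gives
\begin{equation*}
(f|_k\alpha)(\tau)=e(m\lambda'\mu')\sum_{r^2\leq4nm}c(n,r)\,e(r\mu')\,e\bigl((n+r\lambda'+m(\lambda')^2)\tau\bigr).
\end{equation*}
The decisive point is the elementary estimate $n+r\lambda'+m(\lambda')^2\geq(r+2m\lambda')^2/(4m)\geq0$, obtained by completing the square in $r^2\leq4nm$; and since every exponent occurring here lies in $\tfrac{1}{N_0}\mathbb{Z}$ for some fixed positive integer $N_0$, this is a power series in $e(\tau/N_0)$ with no negative terms, so $f|_k\alpha$ is holomorphic at $\infty$. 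As $\alpha$ runs over $\Gamma_1$ this exhausts the cusps of $\Gamma'$, and therefore $f$ is a holomorphic modular form of weight $k$ on $\Gamma'$ in the sense of \cite{Koblitz}.

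The step I expect to be the real obstacle is the one in the middle paragraph: forcing the translation cocycle factor $e^m\bigl(\det\begin{pmatrix}X\\X'\end{pmatrix}\bigr)$ to be trivial, which is exactly what dictates passing to level $N^2$ rather than $N$, and which would be rather unpleasant to check by manipulating the slash operators directly in place of (\ref{slash3}). The only other substantive ingredient is the completion-of-the-square estimate for the cusp exponents, which is where the defining support condition $r^2\leq4nm$ of a Jacobi form is used in an essential way.
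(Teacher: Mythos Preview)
The paper does not give its own proof of this statement; it is quoted directly from Eichler--Zagier as background. So there is nothing in the paper to compare your argument against line by line.

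That said, your proof is correct and is essentially the standard Eichler--Zagier argument. It also mirrors very closely the structure of the paper's proof of the analogous result for modified Jacobi forms (Theorem~\ref{main}): there too one uses the compatibility relation between the two slash operators (Proposition~\ref{split}(ii), the analogue of your use of (\ref{slash3})) to reduce the transformation law for $\phi_X$ under $M\in\Gamma$ to the identity $\phi_{XM}=\xi\,\phi_X$; one then identifies the congruence conditions on $M$ that force the root of unity $\xi$ to be $1$, and finally checks the cusp expansion by substituting into the given Fourier development and bounding the exponents of $q$ from below. Your completion-of-the-square bound $n+r\lambda'+m(\lambda')^2\geq (r+2m\lambda')^2/(4m)\geq 0$ is exactly where the support condition $r^2\leq 4nm$ enters, paralleling the role of the growth condition $r_0(n)/n\to 0$ in the paper's Theorem~\ref{main}. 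The only small remark is that in the $m=0$ aside you implicitly invoke Liouville's theorem to pass from double periodicity in $z$ to independence of $z$; this is of course routine but worth naming.
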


\begin{theorem}[\cite{E-Z} Theorem 1.1]\label{EZtheorem2}
$J_{k,m}(\Gamma)$ is a finite-dimensional space.
\end{theorem}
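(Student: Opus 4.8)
The plan is to produce an injective linear map from $J_{k,m}(\Gamma)$ into a finite direct sum of spaces of ordinary modular forms of weight $k$, and then to invoke the classical finite-dimensionality of the latter. The one nontrivial input is Theorem~\ref{EZtheorem}: for $\phi\in J_{k,m}(\Gamma)$ and $(\lambda,\mu)\in\mathbb{Q}^2$ the function $\phi_{\lambda,\mu}(\tau):=e^m(\lambda^2\tau)\phi(\tau,\lambda\tau+\mu)$ is a modular form of weight $k$ on a subgroup of $\Gamma$ of finite index which depends only on $\Gamma$ and $(\lambda,\mu)$, not on $\phi$.

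First I would fix a finite parameter set $S:=\{(l/N,l'/N):0\le l,l'<N\}\subset\mathbb{Q}^2$ with $N:=2m+1$, and let $\Gamma'\subset\Gamma$ be the intersection of the finitely many finite-index subgroups supplied by Theorem~\ref{EZtheorem} as $(\lambda,\mu)$ runs over $S$; then $\Gamma'$ still has finite index in $\Gamma_1$, and for every $(\lambda,\mu)\in S$ and every $\phi\in J_{k,m}(\Gamma)$ the form $\phi_{\lambda,\mu}$ lies in the space $M_k(\Gamma')$ of holomorphic modular forms of weight $k$ on $\Gamma'$. This yields a $\mathbb{C}$-linear map $J_{k,m}(\Gamma)\to\bigoplus_{(\lambda,\mu)\in S}M_k(\Gamma')$ sending $\phi$ to $(\phi_{\lambda,\mu})_{(\lambda,\mu)\in S}$, and it suffices to show that this map is injective.

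So suppose $\phi\in J_{k,m}(\Gamma)$ satisfies $\phi(\tau,\lambda\tau+\mu)=0$ for all $\tau\in\mathfrak{H}$ and all $(\lambda,\mu)\in S$; I must deduce $\phi\equiv0$. Fix $\tau\in\mathfrak{H}$ and set $\psi(z):=\phi(\tau,z)$. By the elliptic transformation property, condition (ii) (applied to $X=(0\ 1)$ and $X=(1\ 0)$), $\psi$ is holomorphic on $\mathbb{C}$ with $\psi(z+1)=\psi(z)$ and $\psi(z+\tau)=e^{-2\pi i m(2z+\tau)}\psi(z)$; these are exactly the quasi-periodicity relations of a section of a line bundle of degree $2m$ on the elliptic curve $\mathbb{C}/(\mathbb{Z}\tau+\mathbb{Z})$, so by the standard contour-integral count $\psi$, unless identically zero, has precisely $2m$ zeros (with multiplicity) modulo the lattice. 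Since $\tau\notin\mathbb{R}$, the $N^2=(2m+1)^2$ points $\lambda\tau+\mu$ with $(\lambda,\mu)\in S$ are pairwise incongruent modulo $\mathbb{Z}\tau+\mathbb{Z}$, and $(2m+1)^2>2m$; hence $\psi$ vanishes at more points than a nonzero such section can, forcing $\psi\equiv0$. As $\tau$ was arbitrary, $\phi\equiv0$, proving injectivity.

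Consequently $\dim_{\mathbb{C}}J_{k,m}(\Gamma)\le(2m+1)^2\cdot\dim_{\mathbb{C}}M_k(\Gamma')<\infty$, the finite-dimensionality of $M_k(\Gamma')$ being classical (the valence formula, or Riemann--Roch on the compact modular curve attached to $\Gamma'$). The only step requiring care is the zero-count for $\psi$; everything else is bookkeeping, and the case $m=0$ (where $\psi$ is an elliptic function of degree $0$, hence constant, so that $J_{k,0}(\Gamma)\cong M_k(\Gamma)$) is covered by the same reasoning with $N=1$. An alternative route would replace the zero-count by the theta decomposition $\phi(\tau,z)=\sum_{\mu\bmod 2m}h_\mu(\tau)\theta_{m,\mu}(\tau,z)$ forced by condition (ii) and identify $(h_\mu)_\mu$ with a vector-valued modular form of weight $k-1/2$, whose space is again finite-dimensional; but the argument above is shorter once Theorem~\ref{EZtheorem} is available.
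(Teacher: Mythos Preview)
The paper does not give its own proof of this statement; it simply cites Theorem~1.1 of Eichler--Zagier. Your argument is correct and is precisely the strategy Eichler--Zagier use, and it is also the template the present paper later adapts for its own analogue, Theorem~\ref{main2}: send $\phi$ to the tuple of restrictions $(\phi_{\lambda,\mu})$ at rational torsion points (modular forms by Theorem~\ref{EZtheorem}), then appeal to the zero count for $z\mapsto\phi(\tau_0,z)$ to obtain injectivity. In Theorem~\ref{main2} the restrictions are only nearly holomorphic, which forces the extra twist by $\Delta^m$, and the zero count there is $-k$ (Proposition~\ref{zero}) rather than $2m$; but the architecture is identical. A minor efficiency note: you could take $2m+1$ points along a single line (say $(\ell/N,0)$ for $0\le\ell<N$) rather than the full $(2m+1)^2$ grid, since any $2m+1$ inequivalent torsion points already exceed the $2m$ zeros; this is cosmetic and your version is fine.
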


On the other hand, a Jacobi form of weight $k$ and index $0$ is
independent of $z$ by the third condition for a Jacobi form, and
hence it is simply an ordinary modular form of weight $k$ in $\tau$.
In this paper we shall first modify the slash operator described in
(\ref{secondslash}) when $m=0$, and define so called
\textit{modified Jacobi forms} of \textit{weight $k$} and
\textit{index $0$}. And we shall obtain an analogue of Theorem
\ref{EZtheorem} (Theorem \ref{main}). This construction of modified
Jacobi forms is in fact motivated by the Weierstrass
$\sigma$-function, which will give us Klein forms as nearly
holomorphic modular forms of integral weight by virtue of Theorem
\ref{main} (Remark \ref{Klein}).
\par
We shall further construct certain subspaces of modified Jacobi
forms of index $0$ and show that they are indeed finite-dimensional
(Theorem \ref{main2}), which could be an analogue of Theorem
\ref{EZtheorem2}.

\section {Modification of a slash operator}

For a lattice
$L=[\omega_1,\omega_2]=\mathbb{Z}\omega_1+\mathbb{Z}\omega_2$ in
$\mathbb{C}$ the \textit{Weierstrass $\wp$-function} is defined by
\begin{equation*}
\wp(z,L):=\frac{1}{z^2}+\sum_{\omega\in
L-\{0\}}\bigg(\frac{1}{(z-\omega)^2}-\frac{1}{\omega^2}\bigg),
\end{equation*}
and the \textit{Weierstrass $\sigma$-function} is defined as
\begin{equation*}
\sigma(z,L):=z\prod_{\omega\in
L-\{0\}}\bigg(1-\frac{z}{\omega}\bigg)e^{(z/\omega)+(z/\omega)^2/2}.
\end{equation*}
Taking the logarithmic derivative we come up with the
\textit{Weierstrass $\zeta$-function}
\begin{equation*}
\zeta(z,L):=\frac{\sigma'(z,L)}{\sigma(z,L)}=\frac{1}{z}+\sum_{\omega\in
L-\{0\}}\bigg(\frac{1}{z-\omega}+\frac{1}{\omega}+\frac{z}{\omega^2}\bigg).
\end{equation*}
Differentiating the function $\zeta(z+\omega,L)-\zeta(z,L)$ for
$\omega\in L$ results in $0$ because
$\frac{d}{dz}\zeta(z,L)=-\wp(z,L)$ and the $\wp$-function is
periodic with respect to $L$. Hence there is a constant
$\eta(\omega,L)$ satisfying
$\zeta(z+\omega,L)=\zeta(z,L)+\eta(\omega,L)$. So we define the
\textit{Weierstrass $\eta$-function} by $\mathbb{R}$-linearity,
namely, if $z=r_1\omega_1+r_2\omega_2$ with $r_1$,
$r_2\in\mathbb{R}$, then
\begin{equation*}
\eta(z,L):=r_1\eta(\omega_1,L)+ r_2\eta(\omega_2,L).
\end{equation*}
We further define a function $\psi(z,L)$ on $\mathbb{C}$ by
\begin{equation*}
\psi(z,L):=\left\{\begin{array}{ll}-1 & \textrm{if}~z\in L-2L\\
1 & \textrm{otherwise}.\end{array}\right.
\end{equation*}
If $X=(\begin{matrix}\lambda &\mu\end{matrix})\in\mathbb{R}^2$ is
fixed, then the value $\psi(\lambda\tau+\mu,[\tau,1])$  does not
depend on $\tau\in\mathfrak{H}$. Therefore we will simply write
$\psi(X)$ for $\psi(\lambda\tau+\mu,[\tau,1])$.

\begin{lemma}\label{transform}
Let $L$ be a lattice in $\mathbb{C}$.
\begin{itemize}
\item[(i)]
$\sigma(z,L)$, $\eta(z,L)$ and $\psi(z,L)$ are homogeneous of degree
$1$, $-1$ and $0$, respectively. Namely, for any
$\lambda\in\mathbb{C}-\{0\}$ we have
\begin{equation*}
\sigma(\lambda z,\lambda L)=\lambda\sigma(z,L),~\eta(\lambda
z,\lambda L)=\frac{1}{\lambda}\eta(z,L)~\textrm{and}~ \psi(\lambda
z,\lambda L)=\psi(z,L).
\end{equation*}
\item[(ii)]
If $L=[\omega_1,\omega_2]$ and $(\begin{matrix}\lambda &
\mu\end{matrix})\in\mathbb{Z}^2$, then
\begin{equation*}
\psi(\lambda\omega_1+\mu\omega_2,L)=(-1)^{\lambda\mu+\lambda+\mu}.
\end{equation*}
\item[(iii)] Let $X\in\mathbb{R}^2$ and
$M\in\mathrm{SL}_2(\mathbb{Z})$. Then $\psi(XM)=\psi(X)$.
\end{itemize}
\end{lemma}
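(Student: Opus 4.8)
The plan is to settle the three assertions one at a time, each of them essentially by unwinding the defining formula; only (iii) will require a genuine idea.

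For (i), I would substitute $z\mapsto\lambda z$ and $L\mapsto\lambda L$ directly into the Weierstrass product for $\sigma$: as $\omega$ ranges over $L-\{0\}$ the quantity $\lambda z/(\lambda\omega)=z/\omega$ is unchanged, so each factor $(1-z/\omega)\,e^{(z/\omega)+(z/\omega)^2/2}$ is unchanged and only the leading factor $z$ picks up $\lambda$, giving $\sigma(\lambda z,\lambda L)=\lambda\,\sigma(z,L)$. The same substitution in the series for $\zeta$ gives $\zeta(\lambda z,\lambda L)=\lambda^{-1}\zeta(z,L)$; inserting this into $\zeta(z+\omega,L)=\zeta(z,L)+\eta(\omega,L)$ yields $\eta(\lambda\omega,\lambda L)=\lambda^{-1}\eta(\omega,L)$ for $\omega\in L$, and the $\mathbb{R}$-linear definition of $\eta$, applied with the basis $\lambda\omega_1,\lambda\omega_2$ of $\lambda L$, extends this to all $z$. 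For $\psi$, multiplication by $\lambda$ is a bijection $L\to\lambda L$ carrying $2L$ onto $2(\lambda L)$, so $z\in L-2L\iff\lambda z\in(\lambda L)-2(\lambda L)$, whence $\psi(\lambda z,\lambda L)=\psi(z,L)$.

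For (ii), note that since $\omega_1,\omega_2$ is a $\mathbb{Z}$-basis of $L$, the point $z=\lambda\omega_1+\mu\omega_2$ with $(\lambda\ \mu)\in\mathbb{Z}^2$ always lies in $L$, while $z\in 2L$ exactly when $\lambda$ and $\mu$ are both even. Hence $\psi(z,L)=1$ if $\lambda,\mu$ are both even and $\psi(z,L)=-1$ otherwise; a glance at the parity of $\lambda\mu+\lambda+\mu$ in the four classes of $(\lambda,\mu)$ modulo $2$ shows this equals $(-1)^{\lambda\mu+\lambda+\mu}$.

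For (iii), write $M=\begin{pmatrix}a&b\\c&d\end{pmatrix}$, $X=(\lambda\ \mu)$ and fix any $\tau\in\mathfrak{H}$. Then
\[
(XM)\begin{pmatrix}\tau\\1\end{pmatrix}=\lambda(a\tau+b)+\mu(c\tau+d)=(c\tau+d)\bigl(\lambda\,M\tau+\mu\bigr),\qquad M\tau:=\frac{a\tau+b}{c\tau+d},
\]
and since $M\in\mathrm{SL}_2(\mathbb{Z})$ the pair $a\tau+b,\,c\tau+d$ is again a $\mathbb{Z}$-basis of $[\tau,1]$, so $[\tau,1]=[a\tau+b,c\tau+d]=(c\tau+d)\,[M\tau,1]$. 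As $c\tau+d\neq0$ and $M\tau\in\mathfrak{H}$, the homogeneity of $\psi$ from (i) gives
\[
\psi(XM)=\psi\bigl((c\tau+d)(\lambda M\tau+\mu),\,(c\tau+d)[M\tau,1]\bigr)=\psi(\lambda M\tau+\mu,[M\tau,1])=\psi(X),
\]
the last equality because $\psi(\lambda w+\mu,[w,1])$ is independent of $w\in\mathfrak{H}$. I expect the only non-mechanical point to be the factorization $(XM)\binom{\tau}{1}=(c\tau+d)(\lambda M\tau+\mu)$ together with the matching rescaling $[\tau,1]=(c\tau+d)[M\tau,1]$ used in (iii); once these are spotted, (i) does the rest. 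Should one wish to avoid invoking (i) there, an alternative route is to observe that $\psi(X)=1$ whenever $X\notin\mathbb{Z}^2$ and that $X\in\mathbb{Z}^2\iff XM\in\mathbb{Z}^2$ (because $M,M^{-1}\in\mathrm{SL}_2(\mathbb{Z})$), so it suffices to treat $X\in\mathbb{Z}^2$; there one applies (ii) to both $X$ and $XM=(\lambda'\ \mu')$ and checks $\lambda'\mu'+\lambda'+\mu'\equiv\lambda\mu+\lambda+\mu\pmod2$, which, after using $ad-bc=1$, reduces to $\lambda(a+1)(b+1)+\mu(c+1)(d+1)\equiv0\pmod2$ — valid because $ad-bc=1$ forces either $a,d$ both odd or $b,c$ both odd.
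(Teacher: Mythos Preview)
Your proof is correct and follows the paper's approach: the paper likewise says (i) and (ii) come directly from the definitions, and for (iii) it performs exactly your rescaling by $c\tau+d$ (rewriting $[\tau,1]=[a\tau+b,c\tau+d]$ and then invoking the homogeneity of $\psi$ from (i)) to reduce $\psi(XM)$ to $\psi(X)$. Your alternative parity argument for (iii) via (ii) is a correct bonus not present in the paper.
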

\begin{proof}
One can obtain (i) and (ii) directly from the definitions of
$\sigma(z,L)$, $\eta(z,L)$ and $\psi(z,L)$.
\par
Now, let $X=(\begin{matrix}\lambda&\mu\end{matrix})\in\mathbb{R}^2$
and
$M=\begin{pmatrix}a&b\\c&d\end{pmatrix}\in\mathrm{SL}_2(\mathbb{Z})$.
We derive that
\begin{eqnarray*}
\psi(XM)&=&\psi((\lambda a+\mu c)\tau+(\lambda b+\mu
d),[\tau,1])~\textrm{for any
$\tau\in\mathfrak{H}$}\\
&=&\psi((\lambda a+\mu c)\tau+(\lambda b+\mu d),[a\tau+b,c\tau+d])
~\textrm{by the fact $[a\tau+b,c\tau+d]=[\tau,1]$}\\
&=&\psi(\lambda(a\tau+b)+\mu(c\tau+d),[a\tau+b,c\tau+d])\\
&=&\psi\bigg(\lambda\frac{a\tau+b}{c\tau+d}+\mu,\bigg[\frac{a\tau+b}{c\tau+d},
1\bigg]\bigg)~\textrm{by (i)}\\
&=&\psi(\lambda\tau+\mu,[\tau,1])=\psi(X).
\end{eqnarray*}
This proves (iii).
\end{proof}

\begin{lemma}[Legendre Relation]\label{Legendre}
Let $L=[\omega_1,\omega_2]$ be a lattice in $\mathbb{C}$ with
$\omega_1/\omega_2\in\mathfrak{H}$. Then we have
\begin{equation*}
\eta(\omega_2,L)\omega_1-\eta(\omega_1,L)\omega_2=2\pi i.
\end{equation*}
\end{lemma}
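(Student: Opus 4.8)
The plan is to apply the residue theorem to the Weierstrass $\zeta$-function $\zeta(z,L)$ on a fundamental parallelogram for $L$. From its defining series we have $\zeta(z,L)=1/z+(\textrm{something holomorphic near }0)$, so the only singularities of $\zeta(\,\cdot\,,L)$ are simple poles at the lattice points, and the pole at $z=0$ has residue $1$. Since the lattice points form a discrete set, I can fix $t\in\mathbb{C}$ so that the boundary of the parallelogram $P$ with vertices $t,\ t+\omega_2,\ t+\omega_1+\omega_2,\ t+\omega_1$ contains no lattice point while $0$ lies in its interior. The hypothesis $\omega_1/\omega_2\in\mathfrak{H}$ gives $\det\begin{pmatrix}\mathrm{Re}\,\omega_2&\mathrm{Im}\,\omega_2\\ \mathrm{Re}\,\omega_1&\mathrm{Im}\,\omega_1\end{pmatrix}=\mathrm{Re}(\omega_2)\mathrm{Im}(\omega_1)-\mathrm{Im}(\omega_2)\mathrm{Re}(\omega_1)>0$, so traversing $\partial P$ in the order $t\to t+\omega_2\to t+\omega_1+\omega_2\to t+\omega_1\to t$ is counterclockwise, and the residue theorem yields
\[
\oint_{\partial P}\zeta(z,L)\,dz=2\pi i.
\]

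Next I would compute the same integral by pairing opposite sides and invoking the quasi-periodicity $\zeta(z+\omega_j,L)=\zeta(z,L)+\eta(\omega_j,L)$ for $j=1,2$, which is exactly the definition of $\eta$ recalled above. On the side from $t+\omega_1+\omega_2$ to $t+\omega_1$ I substitute $z=w+\omega_1$; the $\zeta(w,L)$-contribution then cancels the integral over the opposite side (from $t$ to $t+\omega_2$), leaving $-\eta(\omega_1,L)\int_t^{t+\omega_2}dw=-\eta(\omega_1,L)\omega_2$. Likewise, on the side from $t+\omega_2$ to $t+\omega_1+\omega_2$ I substitute $z=w+\omega_2$; the $\zeta(w,L)$-contribution cancels the integral over the opposite side (from $t+\omega_1$ to $t$), leaving $\eta(\omega_2,L)\int_t^{t+\omega_1}dw=\eta(\omega_2,L)\omega_1$. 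Adding the two contributions gives
\[
\oint_{\partial P}\zeta(z,L)\,dz=\eta(\omega_2,L)\omega_1-\eta(\omega_1,L)\omega_2,
\]
and comparing with the residue computation produces the asserted identity.

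The substitutions and cancellations are routine bookkeeping; the only genuinely delicate point is orientation. One must check that $\omega_1/\omega_2\in\mathfrak{H}$ is precisely the condition making the chosen traversal of $\partial P$ positively oriented, so that the pole at $0$ contributes $+2\pi i$ rather than $-2\pi i$, and — relatedly — that $t$ can indeed be chosen so that $0$ is the unique lattice point enclosed by $\partial P$. Once the sign is settled, the two evaluations of $\oint_{\partial P}\zeta(z,L)\,dz$ match and the proof is finished.
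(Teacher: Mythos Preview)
Your argument is correct and is precisely the classical contour-integration proof of the Legendre relation; the orientation check and the pairing of opposite sides are handled accurately. The paper itself gives no independent argument but simply refers to \cite{Lang} p.~241 and \cite{Silverman} p.~41, where exactly this residue computation is carried out, so your approach coincides with the one the authors have in mind.
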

\begin{proof}
See \cite{Lang} p. 241 or \cite{Silverman} p. 41.
\end{proof}

Let $k$ be an integer and
$\phi:\mathfrak{H}\times\mathbb{C}\rightarrow\mathbb{C}$ be a
function. We define two operators $|'_k$ and $|''_k$ on $\phi$ as
follows.
\begin{eqnarray*}
(\phi|'_k M)(\tau,z)&:=&(\phi|_{k,0}M)(\tau,z)\quad(M\in\mathrm{SL}_2(\mathbb{R}))\\
(\phi|''_k
\left[\begin{matrix}\lambda&\mu\end{matrix}\right])(\tau,z)&:=&
\bigg(\psi(\begin{matrix}\lambda&\mu\end{matrix})\exp(\eta(\lambda\tau+\mu,[\tau,1])(z+\tfrac{1}{2}(\lambda\tau+\mu)))\bigg)^k\\
&&\times(\phi|_0
\left[\begin{matrix}\lambda&\mu\end{matrix}\right])(\tau,z)
\quad((\begin{matrix}\lambda&\mu\end{matrix})\in\mathbb{R}^2).
\end{eqnarray*}

\begin{proposition}\label{split}
\begin{itemize}
\item[(i)]
For any $X$, $X'\in\mathbb{R}^2$ we get
\begin{equation*}
(\phi|''_k X)|''_k X'=\bigg(\psi(X)\psi(X') \psi(X+X')
e\bigg(\frac{1}{2}\det\begin{pmatrix}X'\\X\end{pmatrix}\bigg)\bigg)^k(\phi|''_k
(X+X')).
\end{equation*}
In particular, if $X$, $X'\in\mathbb{Z}^2$, then
\begin{equation*}
(\phi|''_k X)|''_k X'=\phi|''_k (X+X').
\end{equation*}
\item[(ii)] For any $M\in\mathrm{SL}_2(\mathbb{Z})$ and $X\in\mathbb{R}^2$ we have
\begin{equation*}
(\phi|'_k M)|''_k (XM)= (\phi|''_k X)|'_k M.
\end{equation*}
\end{itemize}
\end{proposition}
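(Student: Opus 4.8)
The plan is to reduce everything to the two known cocycle relations in (\ref{slash3}) together with the explicit transformation formulas for $\psi$ and $\eta$ collected in Lemma \ref{transform} and Lemma \ref{Legendre}. The operators $|'_k$ and $|''_k$ differ from $|_{k,0}$ and $|_0$ only by the scalar factor $\bigl(\psi(X)\exp(\eta(\lambda\tau+\mu,[\tau,1])(z+\tfrac12(\lambda\tau+\mu)))\bigr)^k$, so in each identity I would first peel off this factor, apply the corresponding relation from (\ref{slash3}), and then recombine the $\psi$- and $\eta$-contributions, checking that the leftover scalar is exactly the asserted one. Since the factor enters as a $k$th power, it suffices throughout to verify the $k=1$ identity and raise to the $k$th power at the end.

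For part (i), write $X=(\lambda\ \mu)$, $X'=(\lambda'\ \mu')$, and abbreviate $f(X)(\tau,z):=\psi(X)\exp\bigl(\eta(\lambda\tau+\mu,[\tau,1])(z+\tfrac12(\lambda\tau+\mu))\bigr)$, so that $\phi|''_kX=(f(X)^k)\cdot(\phi|_0X)$. Expanding $(\phi|''_kX)|''_kX'$ gives $f(X')^k\cdot\bigl(f(X)^k\circ(X'\text{-translation})\bigr)\cdot\bigl((\phi|_0X)|_0X'\bigr)$, and by the second relation in (\ref{slash3}) the last factor is $e^0(\cdots)\,\phi|_0(X+X')=\phi|_0(X+X')$ since $m=0$ kills the determinant term there — wait: with $m=0$ the slash $|_0$ is just $\phi(\tau,z+\lambda\tau+\mu)$, so $(\phi|_0X)|_0X'=\phi|_0(X+X')$ with no extra factor. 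Thus the only thing to compute is the product of the three exponential/$\psi$ pieces. For the $\psi$-part this is immediate from the definition. For the $\eta$-part I would use the $\mathbb{R}$-linearity of $\eta(\cdot,[\tau,1])$ in its first argument together with the Legendre Relation (Lemma \ref{Legendre}) applied to $L=[\tau,1]$, i.e. $\eta(\tau,[\tau,1])\cdot1-\eta(1,[\tau,1])\cdot\tau=2\pi i$ (up to the sign convention fixed by $\tau\in\mathfrak H$); tracking the cross terms $\eta(\lambda'\tau+\mu',[\tau,1])(\lambda\tau+\mu)$ versus $\eta(\lambda\tau+\mu,[\tau,1])(\lambda'\tau+\mu')$ produces exactly $\eta(\lambda\tau+\mu,[\tau,1])(\lambda'\tau+\mu')-\eta(\lambda'\tau+\mu',[\tau,1])(\lambda\tau+\mu)=2\pi i\det\binom{X'}{X}$, which exponentiates to $e\bigl(\tfrac12\det\binom{X'}{X}\bigr)$ as claimed. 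The special case $X,X'\in\mathbb{Z}^2$ then follows because $\det\binom{X'}{X}\in\mathbb{Z}$ forces $e\bigl(\tfrac12\det(\cdots)\bigr)=(-1)^{\det(\cdots)}$, and Lemma \ref{transform}(ii) identifies $\psi(X)\psi(X')\psi(X+X')$ with the matching sign $(-1)^{\cdots}$, so the total scalar is $1$.

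For part (ii), I would again strip $|''_k$ and $|'_k$ down to $|_0$ and $|_{k,0}$. The underlying identity $(\phi|_{k,0}M)|_0(XM)=(\phi|_0X)|_{k,0}M$ is precisely the third relation in (\ref{slash3}) with $m=0$. So it remains to match the scalar prefactors: on the left, $\phi|'_kM$ carries no $|''$-factor, and then applying $|''_k(XM)$ introduces $f(XM)(\tau,z)^k$ evaluated at the point reached after the $M$-action; on the right, $\phi|''_kX$ introduces $f(X)$ at the original variables, and then $|'_kM$ rescales $z\mapsto z/(c\tau+d)$ and $\tau\mapsto(a\tau+b)/(c\tau+d)$. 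Here I would invoke Lemma \ref{transform}(iii) for the $\psi$-factors ($\psi(XM)=\psi(X)$) and the homogeneity of $\eta$ from Lemma \ref{transform}(i) — namely $\eta(\lambda z,\lambda L)=\lambda^{-1}\eta(z,L)$ with $\lambda=(c\tau+d)^{-1}$ and $L=[\tau,1]$, using $[a\tau+b,c\tau+d]=[\tau,1]$ — to see that the two exponents coincide.

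The main obstacle I anticipate is purely bookkeeping: in part (i), correctly handling how $f(X)$ transforms under the $X'$-translation $z\mapsto z+\lambda'\tau+\mu'$ (this is what generates the cross term and ultimately the determinant), and pinning down the sign/orientation convention in the Legendre Relation so that the determinant comes out as $\det\binom{X'}{X}$ rather than its negative. In part (ii) the analogous subtlety is making sure the argument of $\eta$ on the left-hand side, after the $M$-twist $X\mapsto XM$ and then the fractional-linear change of variables, lines up with the homogeneity scaling — this is where $[a\tau+b,c\tau+d]=[\tau,1]$ does the work, exactly as in the proof of Lemma \ref{transform}(iii). None of these steps is deep; the content is entirely in the Legendre Relation and the homogeneity of the Weierstrass functions, both already available.
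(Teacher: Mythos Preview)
Your proposal is correct and follows essentially the same route as the paper's proof: both parts are handled by direct expansion of the definitions, reducing to the cocycle relations in (\ref{slash3}) at $m=0$, and then matching the scalar prefactors via the Legendre Relation (Lemma \ref{Legendre}) and linearity of $\eta$ for part (i), and via Lemma \ref{transform}(i),(iii) together with $[a\tau+b,c\tau+d]=[\tau,1]$ for part (ii). The only slips are bookkeeping ones you already flag: the Legendre Relation as stated gives $\eta(1,[\tau,1])\tau-\eta(\tau,[\tau,1])=2\pi i$ (your sign is reversed), and the determinant factor $e\bigl(\tfrac12\det\binom{X'}{X}\bigr)$ arises because the leftover cross term in the exponent is $\tfrac12\bigl(\eta(\omega,[\tau,1])\omega'-\eta(\omega',[\tau,1])\omega\bigr)$, not the full difference---the $\tfrac12$ comes from comparing with the $\tfrac12(\omega+\omega')$ in $f(X+X')$, not from the exponentiation step.
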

\begin{proof}
(i) Let $X=(\begin{matrix}\lambda & \mu\end{matrix})$ and
$X'=(\begin{matrix}\lambda' & \mu'\end{matrix})$. If we set
$\omega=\lambda\tau+\mu$ and $\omega'=\lambda'\tau+\mu'$, then
\begin{eqnarray*}
&&(\phi|''_kX)|''_k X'\\
&=&\bigg(\bigg(\psi(X)\exp(\eta(\omega,[\tau,1])(z+\tfrac{1}{2}\omega))\bigg)^k\phi(\tau,z+\omega)\bigg)|''_k
X'\\
&=&\bigg(\psi(X)\psi(X')
\exp(\eta(\omega,[\tau,1])(z+\omega'+\tfrac{1}{2}\omega)
+\eta(\omega',[\tau,1])(z+\tfrac{1}{2}\omega'))\bigg)^k
\phi(\tau,z+\omega+\omega')\\
&=&\bigg(\psi(X)\psi(X') \psi(X+X')
\exp(\tfrac{1}{2}(-\eta(\omega',[\tau,1])\omega+\eta(\omega,[\tau,1])\omega'))\bigg)^k
(\phi|''_k(X+X'))\\
&=&\bigg(\psi(X)\psi(X)\psi(X+X')
\exp(\tfrac{1}{2}(\lambda'\mu-\lambda\mu')(\tau\eta(1,[\tau,1])-\eta(\tau,[\tau,1]))\bigg)^k
(\phi|''_k (X+X'))\\
&=&\bigg(\psi(X)\psi(X')\psi(X+X')\exp(\pi
i(\lambda'\mu-\lambda\mu'))\bigg)^k(\phi|''_k (X+X'))~\textrm{by
Lemma \ref{Legendre}}.
\end{eqnarray*}
This yields the first part of (i). Moreover, if $X$,
$X'\in\mathbb{Z}^2$, then
\begin{eqnarray*}
&&\psi(X)\psi(X')\psi(X+X')
\exp(\pi i(\lambda'\mu-\lambda\mu'))\\
&=&
(-1)^{\lambda\mu+\lambda+\mu}(-1)^{\lambda'\mu'+\lambda'+\mu'}(-1)^{(\lambda+\lambda')(\mu+\mu')+(\lambda+\lambda')+(\mu+\mu')}
(-1)^{\lambda'\mu-\lambda\mu'}=1~\textrm{by Lemma
\ref{transform}(ii)},
\end{eqnarray*}
which proves the second part of (i).\\
(ii) Let $M=\begin{pmatrix}a&b\\c&d\end{pmatrix}$,
$X=(\begin{matrix}\lambda &\mu\end{matrix})$ and
$(\begin{matrix}\lambda' &\mu'\end{matrix})=XM$. Then
\begin{eqnarray*}
&&(\phi|'_k M)|''_k(XM)\\
&=&\bigg(\psi(XM)
\exp(\eta(\lambda'\tau+\mu',[\tau,1])(z+\tfrac{1}{2}(\lambda'\tau+\mu'))\bigg)^k
((\phi|_{k,0}M)|_0(XM))\\
&=&\bigg(\psi(X)\exp(\eta(\lambda'\tau+\mu',[\tau,1])
(z+\tfrac{1}{2}(\lambda'\tau+\mu'))\bigg)^k((\phi|_0 X)|_{k,0}
M)~\textrm{by Lemma \ref{transform}(iii) and (\ref{slash3})}.
\end{eqnarray*}
On the other hand, we achieve that
\begin{eqnarray*}
&&(\phi|''_k X)|'_k M\\
&=&
\bigg(\bigg(\psi(X)\exp(\eta(\lambda\tau+\mu,[\tau,1])(z+\tfrac{1}{2}(\lambda\tau+\mu)))\bigg)^k
\phi|_0 X\bigg)|_{k,0}M\\
&=&\bigg(\psi(X)
\exp\bigg(\eta\bigg(\lambda\frac{a\tau+b}{c\tau+d}+\mu,\bigg[\frac{a\tau+b}{c\tau+d},1\bigg]\bigg)
\bigg(\frac{z}{c\tau+d}+\frac{1}{2}\bigg(\lambda\frac{a\tau+b}{c\tau+d}+\mu\bigg)\bigg)\bigg)\bigg)^k
((\phi|_0X)|_{k,0}M)
\\
&=&\bigg(\psi(X)
\exp(\eta(\lambda(a\tau+b)+\mu(c\tau+d),[a\tau+b,c\tau+d])
(z+\tfrac{1}{2}(\lambda(a\tau+b)+\mu(c\tau+d)))\bigg)^k\\
&&\times(\phi|_0X)|_{k,0}M~\textrm{by Lemma \ref{transform}(i)}\\
&=&\bigg(\psi(X)\exp(\eta(\lambda'\tau+\mu',[\tau,1])
(z+\tfrac{1}{2}(\lambda'\tau+\mu'))\bigg)^k
((\phi|_0X)|_{k,0}M)~\textrm{by the
fact}~[a\tau+b,c\tau+d]=[\tau,1].
\end{eqnarray*}
Hence we obtain the assertion (ii).
\end{proof}

Define a function $\rho(\tau,z)$ by
\begin{equation*}
\rho(\tau,z):=\exp(\tfrac{1}{2}\eta(1,[\tau,1])z^2-\pi iz).
\end{equation*}
Since $\eta(1,[\tau,1])$ has the expansion
\begin{equation*}
\eta(1,[\tau,1])=\frac{(2\pi
i)^2}{12}\bigg(-1+24\sum_{n=1}^\infty\frac{nq^n}{1-q^n}\bigg)
\end{equation*}
(\cite{Lang} p. 249), $\rho(\tau,z)$ is a holomorphic function on
$\mathfrak{H}\times\mathbb{C}$.

\begin{definition}
A \textit{modified Jacobi form} of \textit{weight} $k$ and
\textit{index $0$} on a subgroup $\Gamma\subset\Gamma_1$ of finite
index is a holomorphic function
$\phi:\mathfrak{H}\times\mathbb{C}\rightarrow\mathbb{C}$ satisfying
\begin{itemize}
\item[(i)] $\phi|'_k M=\phi$ for all $M\in\Gamma$,
\item[(ii)] $\phi|''_k X=\phi$ for all $X\in\mathbb{Z}^2$,
\item[(iii)] for each $M\in\Gamma_1$, the function  $\rho^k(\phi|'_k M)$ has a Fourier
development of the form
\begin{equation*}
\sum_{n\geq0}\sum_{|r|\leq r_0(n)}c(n,r)q^n\zeta^r
\end{equation*}
for some increasing sequence $\{r_0(n)\}_{n\geq0}$ of nonnegative
integers such that
\begin{equation*}
\frac{r_0(n)}{n}\rightarrow0~\textrm{as}~n\rightarrow\infty.
\end{equation*}
Here we allow that $c(n,r)$ could be zero even if $|r|\leq r_0(n)$.
\end{itemize}
In what follows, we denote the $\mathbb{C}$-vector space of all such
functions $\phi$ by $J_k(\Gamma)$. And we let
$J_*(\Gamma):=\sum_{k\in\mathbb{Z}}J_k(\Gamma)$.
\end{definition}

\begin{proposition}\label{graded}
If $\Gamma$ is a subgroup of $\Gamma_1$ of finite index, then
$J_*(\Gamma)$ is a graded ring.
\end{proposition}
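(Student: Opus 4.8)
The plan is to establish the two requirements for a $\mathbb{Z}$-graded ring: that $J_k(\Gamma)J_l(\Gamma)\subseteq J_{k+l}(\Gamma)$ for all $k,l\in\mathbb{Z}$, and that the sum $\sum_k J_k(\Gamma)$ is direct. (The constant function $1$ lies in $J_0(\Gamma)$, so the ring is moreover unital.) Throughout I would use the elementary fact that each automorphy factor occurring in $|'_k$ and $|''_k$ is a fixed base raised to the $k$-th power: for $M=\left[\begin{smallmatrix}a&b\\c&d\end{smallmatrix}\right]$ one has $(\phi|'_k M)(\tau,z)=(c\tau+d)^{-k}\phi(\frac{a\tau+b}{c\tau+d},\frac{z}{c\tau+d})$ since the prefactor $e^0(\cdot)$ equals $1$, and likewise $(\phi|''_k X)(\tau,z)=B(X)^k\,\phi(\tau,z+\lambda\tau+\mu)$ for $X=(\begin{matrix}\lambda&\mu\end{matrix})$, where $B(X):=\psi(X)\exp(\eta(\lambda\tau+\mu,[\tau,1])(z+\tfrac12(\lambda\tau+\mu)))$ and again the factor $e^0(\cdot)$ inside $|_0 X$ is $1$.

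Now fix $\phi\in J_k(\Gamma)$ and $\psi\in J_l(\Gamma)$. The product $\phi\psi$ is holomorphic on $\mathfrak{H}\times\mathbb{C}$. From the formulas just recorded it is immediate that $(\phi\psi)|'_{k+l}M=(\phi|'_k M)(\psi|'_l M)$ and $(\phi\psi)|''_{k+l}X=(\phi|''_k X)(\psi|''_l X)$, so conditions (i) and (ii) for $\phi\psi$ at weight $k+l$ follow at once from those for $\phi$ and $\psi$. For condition (iii), since $\rho^{k+l}=\rho^k\rho^l$ we get, for every $M\in\Gamma_1$, $\rho^{k+l}\big((\phi\psi)|'_{k+l}M\big)=\big(\rho^k(\phi|'_k M)\big)\big(\rho^l(\psi|'_l M)\big)$. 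By hypothesis each factor has a Fourier development $\sum_{n\ge0}\sum_{|r|\le r_0(n)}c(n,r)q^n\zeta^r$, resp. $\sum_{n\ge0}\sum_{|s|\le s_0(n)}d(n,s)q^n\zeta^s$, with $r_0,s_0$ increasing nonnegative and $r_0(n)/n,\,s_0(n)/n\to0$. Multiplying the developments, the coefficient of $q^N\zeta^R$ is the finite sum $\sum_{n_1+n_2=N}\sum_{r+s=R}c(n_1,r)d(n_2,s)$ (finite because $n_1,n_2\ge0$), which can be nonzero only when $|R|\le r_0(n_1)+s_0(n_2)\le r_0(N)+s_0(N)$ by monotonicity. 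Hence $\rho^{k+l}((\phi\psi)|'_{k+l}M)$ has a Fourier development of the required shape with bounding sequence $t_0(N):=r_0(N)+s_0(N)$, which is increasing, nonnegative, and satisfies $t_0(N)/N\to0$. Thus $\phi\psi\in J_{k+l}(\Gamma)$.

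It remains to see that the sum $\sum_k J_k(\Gamma)$ is direct. Suppose $\sum_k\phi_k=0$ with $\phi_k\in J_k(\Gamma)$, all but finitely many zero; I must show every $\phi_k$ vanishes. Since $\Gamma$ has finite index in $\Gamma_1$, it meets the group of lower unipotent matrices in a finite-index, hence nontrivial, subgroup, so there is a positive integer $n$ with $M_m:=\left[\begin{smallmatrix}1&0\\nm&1\end{smallmatrix}\right]\in\Gamma$ for all $m\in\mathbb{Z}$. Condition (i) for $\phi_k$ reads $\phi_k(M_m\tau,\tfrac{z}{nm\tau+1})=(nm\tau+1)^k\phi_k(\tau,z)$. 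Evaluating the identity $\sum_k\phi_k\equiv0$ at the point $(M_m\tau_0,\tfrac{z_0}{nm\tau_0+1})$, for an arbitrary fixed $(\tau_0,z_0)\in\mathfrak{H}\times\mathbb{C}$, yields $\sum_k(nm\tau_0+1)^k\phi_k(\tau_0,z_0)=0$ for all $m\ge1$. The numbers $nm\tau_0+1$ are pairwise distinct (as $\tau_0\ne0$) and nonzero (as $\mathrm{Im}(\tau_0)>0$), while the $k$ run over a finite set; so the Laurent polynomial $w\mapsto\sum_k\phi_k(\tau_0,z_0)w^k$ has infinitely many nonzero roots and is therefore identically zero, forcing $\phi_k(\tau_0,z_0)=0$ for every $k$. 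As $(\tau_0,z_0)$ was arbitrary, every $\phi_k$ vanishes, so the sum is direct, and $J_*(\Gamma)=\bigoplus_k J_k(\Gamma)$ is a graded ring.

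The only point that needs a word of care is the multiplication of Fourier developments in condition (iii): one must know that the Fourier expansion of the product $\rho^k(\phi|'_k M)\cdot\rho^l(\psi|'_l M)$ is the formal (Cauchy) product of the two given expansions. This is standard — both expansions converge absolutely and locally uniformly on a suitable domain near $q=0$, and Fourier coefficients are unique — and once it is granted, the bound $t_0=r_0+s_0$ and the limit $t_0(N)/N\to0$ are immediate. Everything else is a mechanical unravelling of the definitions of $|'_k$, $|''_k$ and $\rho$.
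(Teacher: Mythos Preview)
Your argument for closure under multiplication---checking conditions (i), (ii) by multiplicativity of the automorphy factors, and condition (iii) by multiplying the two Fourier developments and bounding the $\zeta$-exponent by $t_0(N)=r_0(N)+s_0(N)$---is exactly the paper's approach; the paper phrases the final estimate as $|t|/\ell\le 2\max(r_0(\ell)/\ell,\,s_0(\ell)/\ell)$, which is the same bound up to a constant.

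Where you differ is that you also prove directness of the sum $\sum_k J_k(\Gamma)$, using the lower unipotent elements of $\Gamma$ to produce infinitely many evaluations $(nm\tau_0+1)^k$ and then invoking the nonvanishing of a Vandermonde-type system. The paper's proof simply omits this step, establishing only that $J_k(\Gamma)\cdot J_{k'}(\Gamma)\subseteq J_{k+k'}(\Gamma)$ and tacitly treating the grading as given. Your extra paragraph is a genuine (and correct) addition: it makes the statement ``graded ring'' literally true rather than leaving the direct-sum condition implicit, at the modest cost of one more elementary argument.
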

\begin{proof}
Let $\phi\in J_{k}(\Gamma)$ and $\phi'\in J_{k'}(\Gamma)$ for some
$k$, $k'\in\mathbb{Z}$. If $M\in\Gamma$ and $X\in\mathbb{Z}^2$, then
one can readily show by the definitions of slash operators that
$(\phi\phi')|'_{k+k'}M=\phi\phi'$ and
$(\phi\phi')|''_{k+k'}X=\phi\phi'$.
\par
Now let $M\in\Gamma_1$ and suppose that $\rho^k(\phi|'_k M)$ and
$\rho^{k'}(\phi'|'_{k'}M)$ have Fourier developments
\begin{equation*}
\sum_{n\geq0}\sum_{|r|\leq r_0(n)}c(n,r)q^n\zeta^r
~\textrm{and}~\sum_{m\geq0}\sum_{|s|\leq s_0(m)}d(m,s)q^m\zeta^s,
\end{equation*}
respectively. Here $\{r_0(n)\}_{n\geq0}$ and $\{s_0(m)\}_{m\geq0}$
are increasing sequences of nonnegative integers such that both
$r_0(n)/n$ and $s_0(m)/m$ tend to $0$ as $n$, $m\rightarrow\infty$.
We then have
\begin{eqnarray*}
\rho^{k+k'}((\phi\phi')|'_{k+k'}M)&=&\rho^k(\phi|'_k
M)\rho^{k'}(\phi'|'_{k'}M)~\textrm{by the
definition of $|'$}\\
&=&\sum_{\begin{smallmatrix}\ell=n+m\\
n\geq0,~m\geq0\end{smallmatrix}}\sum_{\begin{smallmatrix}t=r+s
\\|r|\leq r_0(n),~|s|\leq s_0(m)\end{smallmatrix}}
c(n,r)d(m,s)q^{\ell}\zeta^t.
\end{eqnarray*}
Note that for a given $\ell\geq0$ there are only finitely many
nonnegative integers $n$ and $m$ such that $\ell=n+m$. So there are
only finitely many integers $t$ which contribute terms of the form
$q^\ell\zeta^t$. Furthermore, we have
\begin{equation*}
\frac{|t|}{\ell}\leq\frac{r_0(n)+s_0(m)}{n+m}
\leq2\max\bigg(\frac{r_0(n+m)}{n+m},\frac{s_0(n+m)}{n+m}\bigg)\rightarrow0~\textrm{as}~
n+m\rightarrow\infty.
\end{equation*}
Hence $\phi\phi'$ satisfies the third condition for a modified
Jacobi form of weight $k+k'$. This proves the proposition.
\end{proof}

\begin{proposition}\label{zero}
Let $\phi$ be a modified Jacobi form of weight $k$. Assume that  the
function $z\mapsto\phi(\tau_0,z)$ is not identically zero for a
fixed point $\tau_0\in\mathfrak{H}$. Let $F$ be a fundamental domain
for the torus $\mathbb{C}/[\tau_0,1]$, whose boundary does not have
any zeros of $\phi(\tau_0,z)$(, such $F$ always exists). Then
$\phi(\tau_0,z)$ has exactly $-k$ zeros (counting multiplicity) in
$F$.
\end{proposition}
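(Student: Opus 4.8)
The plan is to count the zeros of $g(z):=\phi(\tau_0,z)$ by the argument principle, once we record how $g$ transforms under translation by the lattice $L:=[\tau_0,1]$, and then to evaluate the resulting contour integral using Legendre's relation.

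First I would read off the quasi-periodicity of $g$ from condition (ii) in the definition of a modified Jacobi form. Applying $\phi|''_k X=\phi$ to the two generators $X=(\begin{matrix}1&0\end{matrix})$ and $X=(\begin{matrix}0&1\end{matrix})$ of $\mathbb{Z}^2$, and using that $\psi(\begin{matrix}1&0\end{matrix})=\psi(\begin{matrix}0&1\end{matrix})=-1$ by Lemma~\ref{transform}(ii) together with the fact that $\phi|_0 X$ is simply the translation $\phi(\tau,z+\lambda\tau+\mu)$, one obtains, for $\omega\in\{1,\tau_0\}$,
\[
g(z+\omega)=(-1)^k\exp\!\bigl(-k\,\eta(\omega,L)\bigl(z+\tfrac12\omega\bigr)\bigr)\,g(z).
\]
In particular $g(z+\omega)$ and $g(z)$ differ by a nowhere-vanishing holomorphic factor, so the zero set $Z$ of $g$ (discrete, since $g$ is holomorphic and $\not\equiv0$) is invariant under translation by $L$, and the multiplicities are likewise $L$-invariant. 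Hence any fundamental domain for $\mathbb{C}/L$ whose boundary misses $Z$ contains the same number of zeros of $g$ counted with multiplicity, namely $\#(Z/L)$; it therefore suffices to compute this number for one convenient $F$, and I would take the closed parallelogram with vertices $z_0,\ z_0+1,\ z_0+1+\tau_0,\ z_0+\tau_0$, choosing $z_0$ so that $\partial F\cap Z=\emptyset$ (possible since $Z$ is discrete).

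Next, by the argument principle the number of zeros of $g$ in $F$ equals $\frac{1}{2\pi i}\oint_{\partial F}\frac{g'(z)}{g(z)}\,dz$, with $\partial F$ traversed counterclockwise (the correct orientation because $\tau_0\in\mathfrak{H}$). I would split this into the four edge integrals and pair opposite edges. Differentiating the logarithm of the displayed transformation law gives
\[
\frac{g'(z+\omega)}{g(z+\omega)}=\frac{g'(z)}{g(z)}-k\,\eta(\omega,L)\qquad(\omega\in\{1,\tau_0\}),
\]
and substituting this when comparing the edge from $z_0+\tau_0$ to $z_0+1+\tau_0$ with the edge from $z_0$ to $z_0+1$, and the edge from $z_0+1$ to $z_0+1+\tau_0$ with the edge from $z_0$ to $z_0+\tau_0$, the two $g'/g$ terms cancel within each pair and what remains is
\[
\oint_{\partial F}\frac{g'(z)}{g(z)}\,dz=k\,\eta(\tau_0,L)-k\tau_0\,\eta(1,L).
\]
Finally, the Legendre relation (Lemma~\ref{Legendre}) applied with $\omega_1=\tau_0$, $\omega_2=1$ gives $\tau_0\,\eta(1,L)-\eta(\tau_0,L)=2\pi i$, so the integral equals $-2\pi i k$ and the number of zeros is $-k$, as claimed.

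The bookkeeping in the transformation law and the edge parametrizations is routine. The one point that genuinely needs care is the reduction to the standard parallelogram, i.e.\ that the count of zeros in $F$ does not depend on the choice of fundamental domain; this is exactly where one uses that the translation cocycle $(-1)^k\exp(-k\,\eta(\omega,L)(z+\tfrac12\omega))$ is holomorphic and nowhere zero, so that $Z$ and its multiplicities descend to $\mathbb{C}/L$. One must also track the signs carefully in the last two steps, since an orientation error or a sign slip in the Legendre relation would turn $-k$ into $k$.
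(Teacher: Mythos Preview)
Your proof is correct and follows essentially the same route as the paper: extract the quasi-periodicity of $g(z)=\phi(\tau_0,z)$ from condition (ii), take the logarithmic derivative to see how $g'/g$ shifts under $z\mapsto z+\omega$, compute $\tfrac{1}{2\pi i}\oint_{\partial F}g'/g$ over a standard parallelogram by pairing opposite edges, and conclude via the Legendre relation. Your added remark that the cocycle is holomorphic and nowhere vanishing, so that the zero count is independent of the choice of fundamental domain, is a small but worthwhile clarification that the paper leaves implicit.
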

\begin{proof}
It follows from the second condition for a modified Jacobi form
$\phi$ that
\begin{equation}\label{2ndcondition}
\bigg(\psi(\begin{matrix}\lambda&\mu\end{matrix})\exp(\eta(\lambda\tau+\mu,[\tau,1])(z+\tfrac{1}{2}(\lambda\tau+\mu)))\bigg)^k
\phi(\tau,z+\lambda\tau+\mu)=\phi(\tau,z)
\quad((\begin{matrix}\lambda&\mu\end{matrix})\in\mathbb{Z}^2).
\end{equation}
Differentiating the above equation with respect to $z$ we have
\begin{equation}\label{differentiation}
\bigg(\psi(\begin{matrix}\lambda&\mu\end{matrix})\exp(\eta(\lambda\tau+\mu,[\tau,1])(z+\tfrac{1}{2}(\lambda\tau+\mu)))\bigg)^k
\bigg(k\eta(\lambda\tau+\mu,[\tau,1])\phi(\tau,z+\lambda\tau+\mu)
+\phi_z(\tau,z+\lambda\tau+\mu)\bigg)=\phi_z(\tau,z)
\end{equation}
where $\phi_z=\frac{d}{dz}\phi$. Dividing the equation
(\ref{differentiation}) by (\ref{2ndcondition}) we obtain
\begin{equation*}
k\eta(\lambda\tau+\mu,[\tau,1])+\frac{\phi_z}{\phi}(\tau,z+\lambda\tau+\mu)=\frac{\phi_z}{\phi}(\tau,z).
\end{equation*}
If we put $(\begin{matrix}\lambda &\mu\end{matrix})=
(\begin{matrix}1&0\end{matrix})$ and
$(\begin{matrix}0&1\end{matrix})$ in the previous equation, then we
get that
\begin{equation}\label{putting}
k\eta(\tau,[\tau,1])+\frac{\phi_z}{\phi}(\tau,z+\tau)=\frac{\phi_z}{\phi}(\tau,z)~\textrm{and}~
k\eta(1,[\tau,1])+\frac{\phi_z}{\phi}(\tau,z+1)=\frac{\phi_z}{\phi}(\tau,z),
\end{equation}
respectively. Now, we set $\partial F=\partial F_1+\partial
F_2+\partial F_3+\partial F_4$ as follows:
\begin{equation*}
\begindc{\undigraph}[7]
\obj(1,1){$z_0$}[\west]

\obj(11,1){$z_0+1$}[\east]

\obj(15,8){$z_0+1+\tau_0$}[\east]

\obj(5,8){$z_0+\tau_0$}[\west]

\mor{$z_0$}{$z_0+1$}{$\partial F_1$}[\atright,\solidarrow]

\mor{$z_0+1$}{$z_0+1+\tau_0$}{$\partial F_2$}[\atright,\solidarrow]

\mor{$z_0+1+\tau_0$}{$z_0+\tau_0$}{$\partial
F_3$}[\atright,\solidarrow]

\mor{$z_0+\tau_0$}{$z_0$}{$\partial F_4$}[\atright,\solidarrow]
\enddc
\end{equation*}
By the Residue Theorem we derive that the number of zeros of
$\phi(\tau_0,z)$ in $F$ is equal to
\begin{eqnarray*}
\frac{1}{2\pi i} \oint_{\partial
F}\frac{\phi_z}{\phi}(\tau_0,z)dz&=&\bigg( \frac{1}{2\pi i}
\oint_{\partial F_1}\frac{\phi_z}{\phi}(\tau_0,z)dz +\frac{1}{2\pi
i} \oint_{\partial F_3}\frac{\phi_z}{\phi}(\tau_0,z)dz \bigg)\\
&&+ \bigg( \frac{1}{2\pi i} \oint_{\partial
F_2}\frac{\phi_z}{\phi}(\tau_0,z)dz
+\frac{1}{2\pi i} \oint_{\partial F_4}\frac{\phi_z}{\phi}(\tau_0,z)dz \bigg)\\
&=&\bigg( \frac{1}{2\pi i} \oint_{\partial
F_1}\frac{\phi_z}{\phi}(\tau_0,z)dz -\frac{1}{2\pi i}
\oint_{\partial F_1}\frac{\phi_z}{\phi}(\tau_0,z+\tau_0)dz \bigg)
\\
&&+ \bigg(-\frac{1}{2\pi i} \oint_{\partial
F_4}\frac{\phi_z}{\phi}(\tau_0,z+1)dz
+\frac{1}{2\pi i} \oint_{\partial F_4}\frac{\phi_z}{\phi}(\tau_0,z)dz \bigg)\\
&=&\frac{1}{2\pi i}\oint_{\partial F_1}k\eta(\tau_0,[\tau_0,1])dz +
\frac{1}{2\pi i}\oint_{\partial F_4}k\eta(1,[\tau_0,1])dz~\textrm{by
(\ref{putting})}\\
&=&\frac{k}{2\pi
i}(\eta(\tau_0,[\tau_0,1])-\tau_0\eta(1,[\tau_0,1]))=-k~\textrm{by
Lemma \ref{Legendre}}.
\end{eqnarray*}
This completes the proof.
\end{proof}

\begin{corollary}
Let $\Gamma$ be a subgroup of $\Gamma_1$ of finite index. For all
positive integers $k$ we have $J_k(\Gamma)=\{0\}$.
\end{corollary}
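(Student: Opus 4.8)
The plan is to deduce this immediately from Proposition \ref{zero}, whose zero count $-k$ becomes negative as soon as $k$ is positive. Let $\phi\in J_k(\Gamma)$ with $k$ a positive integer, and suppose for contradiction that $\phi\neq 0$. Since $\phi$ is holomorphic on $\mathfrak{H}\times\mathbb{C}$, if the slice $z\mapsto\phi(\tau,z)$ were identically zero for every $\tau\in\mathfrak{H}$ then $\phi$ itself would vanish identically; hence there exists some $\tau_0\in\mathfrak{H}$ for which $z\mapsto\phi(\tau_0,z)$ is not identically zero.

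Now pick a fundamental domain $F$ for the torus $\mathbb{C}/[\tau_0,1]$ whose boundary contains no zero of $\phi(\tau_0,z)$, which is possible by Proposition \ref{zero}. That proposition then asserts that $\phi(\tau_0,z)$ has exactly $-k$ zeros in $F$, counted with multiplicity. But the number of zeros of a holomorphic function is a nonnegative integer, whereas $-k<0$ since $k\geq 1$; this is the desired contradiction. Therefore $\phi=0$, and as $\phi\in J_k(\Gamma)$ was arbitrary we conclude that $J_k(\Gamma)=\{0\}$ for every positive integer $k$. There is essentially no obstacle to overcome: the entire analytic content has already been carried out in Proposition \ref{zero}, and the only thing left to observe is the sign of the weight.
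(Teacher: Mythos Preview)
Your proof is correct and follows essentially the same approach as the paper: assume a nonzero $\phi\in J_k(\Gamma)$, choose $\tau_0$ so that $\phi(\tau_0,\cdot)$ is not identically zero, apply Proposition~\ref{zero} to obtain $-k$ zeros in a fundamental domain, and contradict nonnegativity of the zero count when $k>0$. The paper's argument is identical in structure and content.
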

\begin{proof}
Assume that there exists a nonzero element $\phi$ of $J_k(\Gamma)$
for some $k>0$. Take a point $\tau_0\in\mathfrak{H}$ such that
$\phi(\tau_0,z)$ is not identically zero. And, consider a
fundamental domain $F$ for $\mathbb{C}/[\tau_0,1]$ whose boundary
has no zeros of $\phi(\tau_0,z)$. Then $\phi(\tau_0,z)$ has $-k<0$
zeros in $F$ by Proposition \ref{zero}, which is impossible. Hence
$J_k(\Gamma)=\{0\}$ for all $k>0$, as desired.
\end{proof}

\section{Construction of nearly holomorphic modular forms}

In this section we shall show that through a modified Jacobi form
one can generate nearly holomorphic modular forms of integral
weight.

\begin{lemma}\label{Legendre2}
If $(\begin{matrix}\lambda&\mu\end{matrix})\in\mathbb{R}^2$, then
\begin{equation*}
\exp(\eta(\lambda\tau+\mu,[\tau,1])(z+\tfrac{1}{2}(\lambda\tau
+\mu)))\rho(\tau,z)\rho(\tau,z+\lambda\tau+\mu)^{-1}=
e(\tfrac{1}{2}\mu(1-\lambda))q^{\lambda(1-\lambda)/2}\zeta^{-\lambda}.
\end{equation*}
\end{lemma}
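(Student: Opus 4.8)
The plan is to compute both sides as explicit functions of $\tau$ and $z$ and check equality. The key input is that the Weierstrass $\eta$-function is $\mathbb{R}$-linear in its first argument, so $\eta(\lambda\tau+\mu,[\tau,1]) = \lambda\,\eta(\tau,[\tau,1]) + \mu\,\eta(1,[\tau,1])$. Writing $\eta_1 := \eta(1,[\tau,1])$ and $\eta_2 := \eta(\tau,[\tau,1])$, the Legendre relation (Lemma \ref{Legendre}, applied to $L = [\tau,1]$ with $\omega_1 = \tau$, $\omega_2 = 1$) gives $\eta_1\tau - \eta_2 = 2\pi i$, hence $\eta_2 = \eta_1\tau - 2\pi i$.

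First I would expand the logarithm of the left-hand side. From the definition $\rho(\tau,z) = \exp(\tfrac12\eta_1 z^2 - \pi i z)$ we get
\[
\log\frac{\rho(\tau,z)}{\rho(\tau,z+\lambda\tau+\mu)} = \tfrac12\eta_1\big(z^2 - (z+\lambda\tau+\mu)^2\big) - \pi i\big(z - (z+\lambda\tau+\mu)\big),
\]
and the exponential prefactor contributes $\eta(\lambda\tau+\mu,[\tau,1])\big(z + \tfrac12(\lambda\tau+\mu)\big) = (\lambda\eta_2 + \mu\eta_1)\big(z+\tfrac12(\lambda\tau+\mu)\big)$. Adding these, the terms linear in $z$ with coefficient $\eta_1$ should cancel: the prefactor gives $+\mu\eta_1 z$ while $-\tfrac12\eta_1\big(2z(\lambda\tau+\mu)\big)$ contributes $-\eta_1 z(\lambda\tau+\mu)$, and the surviving $\eta_1$-linear-in-$z$ piece is $\eta_1 z(\lambda\eta_2/\eta_1 \cdot \text{(nothing)} \dots)$ — more carefully, I collect the coefficient of $z$ as $\lambda\eta_2 + \mu\eta_1 - \eta_1(\lambda\tau+\mu) = \lambda(\eta_2 - \eta_1\tau) = -2\pi i\lambda$ by Legendre. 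So the coefficient of $z$ in the exponent is $-2\pi i\lambda$, matching the factor $\zeta^{-\lambda} = e(-\lambda z)$ on the right.

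Next I would collect the $z$-free part of the exponent. It is a combination of $(\lambda\eta_2+\mu\eta_1)\cdot\tfrac12(\lambda\tau+\mu)$, $-\tfrac12\eta_1(\lambda\tau+\mu)^2$, and $\pi i(\lambda\tau+\mu)$. Substituting $\eta_2 = \eta_1\tau - 2\pi i$ should make every occurrence of $\eta_1$ cancel, leaving a purely elementary expression in $\lambda$, $\mu$, $\tau$, $2\pi i$; grouping the coefficient of $\tau$ gives the exponent of $q$, and the constant gives the $e(\cdot)$ factor. The expected outcome is exponent of $q$ equal to $\tfrac12\lambda(1-\lambda)$ and constant term $2\pi i\cdot\tfrac12\mu(1-\lambda)$, i.e. the right-hand side $e(\tfrac12\mu(1-\lambda))\,q^{\lambda(1-\lambda)/2}\,\zeta^{-\lambda}$.

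The only real obstacle is bookkeeping: keeping the three $\eta_1$-bearing pieces straight through the substitution from Legendre and not dropping a factor of $\tfrac12$ or a sign. There is no conceptual difficulty — once the $\eta_1$-terms are shown to cancel identically (which they must, since $\eta_1$ is the only transcendental quantity present and the right-hand side is elementary), the remaining identity is a polynomial identity in $\lambda$, $\mu$, $\tau$ that is checked by direct comparison of coefficients.
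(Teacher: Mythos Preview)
Your proposal is correct and is essentially the same direct computation the paper carries out: expand the exponent using $\mathbb{R}$-linearity of $\eta$, observe that all occurrences of $\eta(1,[\tau,1])$ and $\eta(\tau,[\tau,1])$ appear only through the combination $\tau\,\eta(1,[\tau,1])-\eta(\tau,[\tau,1])$, and then replace that combination by $2\pi i$ via the Legendre relation. The paper does this in one pass rather than separating the $z$-linear and $z$-free parts, but there is no substantive difference.
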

\begin{proof}
We achieve that
\begin{eqnarray*}
&&\exp(\eta(\lambda\tau+\mu,[\tau,1])(z+\tfrac{1}{2}(\lambda\tau
+\mu)))\rho(\tau,z)\rho(\tau,z+\lambda\tau+\mu)^{-1}\\
&=&\exp\bigg((\lambda\eta(\tau,[\tau,1])+\mu\eta(1,[\tau,1]))
(z+\tfrac{1}{2}(\lambda\tau+\mu))
+\tfrac{1}{2}\eta(1,[\tau,1])z^2-\pi
iz\\
&&-\tfrac{1}{2}\eta(1,[\tau,1])
(z+\lambda\tau+\mu)^2+\pi i(z+\lambda\tau+\mu)\bigg)\\
&=&\exp\bigg((\tau\eta(1,[\tau,1])-\eta(\tau,[\tau,1]))(-\lambda
-\tfrac{1}{2}\lambda^2\tau-\tfrac{1}{2}\lambda\mu)+\pi i\lambda\tau
+\pi i\mu\bigg)\\
&=&\exp\bigg(2\pi i(-\lambda
-\tfrac{1}{2}\lambda^2\tau-\tfrac{1}{2}\lambda\mu)+\pi i\lambda\tau
+\pi i\mu\bigg)~\textrm{by Lemma \ref{Legendre}}\\
&=&e(\tfrac{1}{2}\mu(1-\lambda))q^{\lambda(1-\lambda)/2}\zeta^{-\lambda}
\end{eqnarray*}
as desired.
\end{proof}

\begin{theorem}\label{main}
Let $\Gamma$ be a subgroup of $\Gamma_1$ of finite index. Let
$\phi\in J_k(\Gamma)$ for some integer $k$. Then for
$X\in\mathbb{Q}^2$ the function
\begin{equation*}
\phi_X(\tau):=(\phi|''_k X)(\tau,0)
\end{equation*}
is a nearly holomorphic modular form (that is, holomorphic on
$\mathfrak{H}$ and meromorphic at every cusp) of weight $k$ on some
subgroup of $\Gamma$ of finite index depending only on $\Gamma$ and
$X$.
\end{theorem}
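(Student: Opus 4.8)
\emph{Setup.} The plan is to produce, for a given $X=(\begin{matrix}\lambda&\mu\end{matrix})\in\mathbb{Q}^2$, an explicit finite-index subgroup $\Gamma_X\subset\Gamma$ on which $\phi_X$ satisfies the weight-$k$ transformation law, and then to read the behaviour of $\phi_X$ at the cusps directly off condition (iii). Fix a common denominator $N$ of $\lambda$ and $\mu$ and set $\Gamma_X:=\Gamma\cap\Gamma(2N^2)$, where $\Gamma(2N^2)$ is the principal congruence subgroup of level $2N^2$; this has finite index in $\Gamma_1$ and depends only on $\Gamma$ and $X$.

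\emph{Transformation law.} For $M=\begin{pmatrix}a&b\\c&d\end{pmatrix}\in\Gamma_1$, unwinding the definition of $|'_k$ gives $\big((\phi|''_kX)|'_kM\big)(\tau,0)=(c\tau+d)^{-k}(\phi|''_kX)(M\tau,0)=(c\tau+d)^{-k}\phi_X(M\tau)$, and by Proposition \ref{split}(ii) this equals $\big((\phi|'_kM)|''_k(XM)\big)(\tau,0)$. If $M\in\Gamma$, then $\phi|'_kM=\phi$, hence $(c\tau+d)^{-k}\phi_X(M\tau)=\phi_{XM}(\tau)$; so it suffices to check $\phi_{XM}=\phi_X$ for $M\in\Gamma_X$. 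Put $Y:=XM-X$. Since $M\equiv I\pmod{2N^2}$ and $NX\in\mathbb{Z}^2$, a one-line congruence shows $Y\in 2N\mathbb{Z}^2$. Applying Proposition \ref{split}(i) to the pair $(Y,X)$ and using condition (ii) in the form $\phi|''_kY=\phi$,
\begin{equation*}
\phi|''_kX=(\phi|''_kY)|''_kX=\bigg(\psi(Y)\psi(X)\psi(X+Y)\,e\Big(\tfrac{1}{2}\det\begin{pmatrix}X\\Y\end{pmatrix}\Big)\bigg)^{k}\phi|''_k(X+Y).
\end{equation*}
Because $Y\in 2N\mathbb{Z}^2$ we have $\psi(Y)=1$ by Lemma \ref{transform}(ii), $\psi(X+Y)=\psi(X)$ since $\psi(\,\cdot\,,L)$ is invariant under translation by $2L$, and $\det\begin{pmatrix}X\\Y\end{pmatrix}\in 2\mathbb{Z}$; as $\psi(X)\in\{\pm1\}$, the bracketed factor is $(\psi(X)^2)^k=1$. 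Thus $\phi|''_k(X+Y)=\phi|''_kX$, and evaluating at $z=0$ gives $\phi_{XM}=\phi_{X+Y}=\phi_X$, i.e.\ $\phi_X|_kM=\phi_X$ for every $M\in\Gamma_X$.

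\emph{Holomorphy and cusps.} Holomorphy of $\phi_X$ on $\mathfrak{H}$ is clear, since $\eta(\lambda\tau+\mu,[\tau,1])=\lambda\eta(\tau,[\tau,1])+\mu\eta(1,[\tau,1])$ is holomorphic in $\tau$ by the $q$-expansion of $\eta(1,[\tau,1])$ recalled above together with Lemma \ref{Legendre}, so $\phi|''_kX$ is holomorphic on $\mathfrak{H}\times\mathbb{C}$. For the cusps, fix an arbitrary $M\in\Gamma_1$, write $XM=(\begin{matrix}\lambda'&\mu'\end{matrix})$, and again use $\phi_X|_kM=\big((\phi|'_kM)|''_k(XM)\big)(\tau,0)$. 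Setting $z=0$ in the definition of $|''_k$ and in Lemma \ref{Legendre2} (note $\rho(\tau,0)=1$) rewrites the $\exp(\eta(\cdots))$-factor as $e(\tfrac{1}{2}\mu'(1-\lambda'))q^{\lambda'(1-\lambda')/2}\rho(\tau,\lambda'\tau+\mu')$, whence
\begin{equation*}
(\phi_X|_kM)(\tau)=\Big(\psi(XM)\,e\big(\tfrac{1}{2}\mu'(1-\lambda')\big)\,q^{\lambda'(1-\lambda')/2}\Big)^{k}\big(\rho^k(\phi|'_kM)\big)(\tau,\lambda'\tau+\mu').
\end{equation*}
Now substitute $z=\lambda'\tau+\mu'$, so $\zeta=e(\mu')q^{\lambda'}$, into the Fourier development $\rho^k(\phi|'_kM)=\sum_{n\geq0}\sum_{|r|\leq r_0(n)}c(n,r)q^n\zeta^r$ provided by condition (iii): the outcome is a $q$-series whose exponents are $\tfrac{k\lambda'(1-\lambda')}{2}+n+r\lambda'$. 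Since $r_0(n)/n\to0$, for all large $n$ one has $n+r\lambda'\geq n-|\lambda'|r_0(n)\geq n/2$, so these exponents are bounded below and lie in $\tfrac{1}{h}\mathbb{Z}$ for a fixed positive integer $h$; hence $\phi_X|_kM$ has a Fourier expansion in $q^{1/h}$ with only finitely many terms of negative exponent, i.e.\ $\phi_X$ is meromorphic at the cusp $M(\infty)$. Together with the transformation law this shows $\phi_X$ is a nearly holomorphic modular form of weight $k$ on $\Gamma_X$.

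\emph{Main obstacle.} The only subtle point is arranging that the cocycle factor in Proposition \ref{split}(i) becomes trivial; this is precisely what forces the level $2N^2$, since exactly then $Y=XM-X\in2N\mathbb{Z}^2$, and that single containment makes $\psi(Y)=1$, $\psi(X+Y)=\psi(X)$ and $e(\tfrac{1}{2}\det\begin{pmatrix}X\\Y\end{pmatrix})=1$ simultaneously. The rest is bookkeeping with the slash identities (\ref{slash3}), Proposition \ref{split}, and Lemma \ref{Legendre2}.
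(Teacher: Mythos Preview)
Your proof is correct and follows essentially the same route as the paper: the modular transformation law via Proposition \ref{split}(ii) combined with Proposition \ref{split}(i) to handle the $\mathbb{Z}^2$-shift $Y=XM-X$, and the cusp analysis via Lemma \ref{Legendre2} applied to the Fourier development in condition (iii). The only cosmetic difference is that the paper first proves $\phi_{X+X'}=e\big(-\tfrac{k}{2}\det\begin{pmatrix}X\\X'\end{pmatrix}\big)\phi_X$ for $X'\in2\mathbb{Z}^2$ and then imposes a separate determinant condition, arriving at the slightly larger subgroup $\Gamma\cap\Gamma\big(\tfrac{2N^2}{\gcd(N,k)}\big)$, whereas you take $Y\in2N\mathbb{Z}^2$ to kill all three factors at once and land on $\Gamma\cap\Gamma(2N^2)$; both are finite index, so the theorem is unaffected.
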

\begin{proof}
Let $X=(\begin{matrix}\lambda &\mu\end{matrix})$. For any $X'\in
2\mathbb{Z}^2$ we have
\begin{eqnarray*}
\phi_{X+X'}(\tau)&=&(\phi|''_k(X+X'))(\tau,0)
=(\phi|''_k(X'+X)(\tau,0)\\
&=&\bigg(\psi(X')\psi(X) \psi(X'+X)
e\bigg(\frac{1}{2}\det\begin{pmatrix}X\\X'\end{pmatrix}\bigg)\bigg)^{-k}
((\phi|''_k X')|''_k X)(\tau,0)~\textrm{by Proposition \ref{split}(i)}\\
&=&\bigg(\psi(X) \psi(X)
e\bigg(\frac{1}{2}\det\begin{pmatrix}X\\X'\end{pmatrix}\bigg)\bigg)^{-k}
((\phi|''_k X')|''_k X)(\tau,0)~\textrm{because
$X'\in 2\mathbb{Z}^2$}\\
&=&e\bigg(-\frac{k}{2}\det\begin{pmatrix}X\\X'\end{pmatrix}\bigg)
\phi_X(\tau)~\textrm{from the second condition for $\phi(\tau,z)$}.
\end{eqnarray*}
On the other hand, for any
$M=\begin{pmatrix}a&b\\c&d\end{pmatrix}\in\Gamma$ we deduce that
\begin{eqnarray*}
(c\tau+d)^{-k}\phi_X\bigg(\frac{a\tau+b}{c\tau+d}\bigg)&=&
((\phi|''_k X)|'_k M)(\tau,0)~\textrm{by the definitions of $\phi_X$ and slash operators}\\
&=&((\phi|'_k M)|''_k (XM))(\tau,0)~\textrm{by Proposition \ref{split}(ii)}\\
&=&(\phi|''_k (XM))(\tau,0)~\textrm{by the first condition for
$\phi(\tau,z)$}\\
&=&\phi_{XM}(\tau).
\end{eqnarray*}
Thus the above observations indicate that $\phi_X(\tau)$ behaves
like a modular form with respect to the congruence subgroup
\begin{eqnarray*}
&&\bigg\{M\in\Gamma~:~XM\equiv X\pmod{2\mathbb{Z}^2},~\frac{k}{2}
\cdot\det\begin{pmatrix}X\\XM-X\end{pmatrix}\in\mathbb{Z}
\bigg\}\\
&=&\bigg\{\begin{pmatrix} a&b\\c&d
\end{pmatrix}\in\Gamma~:~
(a-1)\lambda+c\mu,~b\lambda+(d-1)\mu,~k(b\lambda^2+(d-a)\lambda\mu-c\mu^2)\in
2\mathbb{Z} \bigg\}
\end{eqnarray*}
which contains $\Gamma\cap\Gamma(\frac{2N^2}{\gcd(N,k)})$ if $X\in
N^{-1}\mathbb{Z}^2$ for some integer $N\geq1$ and
$\Gamma(\frac{2N^2}{\gcd(N,k)})$ is the principal congruence
subgroup of level $\frac{2N^2}{\gcd(N,k)}$.
\par
Next, let $M=\begin{pmatrix}a&b\\c&d\end{pmatrix}\in\Gamma_1$ and
suppose that $\rho^k(\phi|'_k M)$ has a Fourier development
\begin{equation*}
\sum_{n\geq0}\sum_{|r|\leq r_0(n)}c(n,r)q^n\zeta^r
\end{equation*}
such that $r_0(n)/n\rightarrow0$ as $n\rightarrow\infty$. Then we
get that
\begin{eqnarray}
&&(c\tau+d)^{-k}\phi_X\bigg(\frac{a\tau+b}{c\tau+d}\bigg)\nonumber\\
&=&((\phi|''_k X)|'_k M)(\tau,0)~\textrm{by the definitions of $\phi_X$ and slash operators}\nonumber\\
&=&((\phi|'_k M)|''_k (XM))(\tau,0)~\textrm{by Proposition \ref{split}(ii)}\nonumber\\
&=&
\bigg(\psi(XM)\exp(\tfrac{1}{2}\eta(\lambda'\tau+\mu',[\tau,1])(\lambda'\tau+\mu'))\bigg)^k
(\phi|'_k M)(\tau,\lambda'\tau+\mu')~\textrm{where}~
(\begin{matrix}\lambda'&\mu'\end{matrix})=XM\nonumber\\
&=&\bigg(\psi(XM)\exp(\tfrac{1}{2}\eta(\lambda'\tau+\mu',[\tau,1])(\lambda'\tau+\mu'))\bigg)^k
\bigg(\rho^{-k}\sum_{n\geq0}\sum_{|r|\leq r_0(n)}c(n,r)q^n\zeta^r\bigg)(\tau,\lambda'\tau+\mu')\nonumber\\
&=&\bigg(\psi(XM)e(\tfrac{1}{2}\mu'(1-\lambda'))q^{\lambda'(1-\lambda')/2}\bigg)^k\sum_{n\geq0}\sum_{|r|\leq
r_0(n)} c(n,r)e(\mu'r)q^{n+\lambda'r}~\textrm{by Lemma
\ref{Legendre2}}.\label{sum}
\end{eqnarray}
Now, we observe that
\begin{equation*}
n+\lambda' r\geq
n-|\lambda'|r_0(n)=n\bigg(1-|\lambda'|\frac{r_0(n)}{n}\bigg)\rightarrow\infty
~\textrm{as}~n\rightarrow\infty,
\end{equation*}
from which it follows that for a given rational number $\ell$ there
are only finitely many $n$ and $r$ which contributes the term
$q^{\ell}$ in (\ref{sum}). Hence $\phi_X(\tau)$ is meromorphic at
each cusp. This completes the proof.
\end{proof}

We are ready to introduce well-known Klein forms in view of
Weierstrass $\sigma$-function which will be a concrete example of
modified Jacobi form.

\begin{lemma}\label{sigmafunction}
\begin{itemize}
\item[(i)] Let $L$ be a lattice in $\mathbb{C}$.
If $\omega\in L$, then
\begin{equation*}
\frac{\sigma(z+\omega,L)}{\sigma(z,L)}=\psi(\omega,L)\exp(\eta(\omega,L)(z+\tfrac{1}{2}\omega)).
\end{equation*}
\item[(ii)]
The function $\sigma(\tau,z):=\sigma(z,[\tau,1])$ has the infinite
product expansion
\begin{equation*}
\sigma(\tau,z)=-\frac{1}{2\pi
i}\rho(\tau,z)(1-\zeta)\prod_{n=1}^\infty
\frac{(1-q^n\zeta)(1-q^{n}\zeta^{-1})}{(1-q^n)^2}.
\end{equation*}
\end{itemize}
\end{lemma}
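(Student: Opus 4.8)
The plan is to establish (i) first and then deduce (ii) by a Liouville-type comparison, since the right-hand side of (ii) is manifestly built so as to reproduce the transformation law in (i). For (i), the case $\omega=0$ being immediate, fix $\omega\in L-\{0\}$ and put $F(z):=\sigma(z+\omega,L)/\sigma(z,L)$. As the only zeros of $\sigma(\cdot,L)$ are the simple ones at the points of $L$, the function $F$ is holomorphic and nowhere zero on $\mathbb{C}$, so $F'/F$ is entire. Using $\sigma'/\sigma=\zeta(\cdot,L)$ and the identity $\zeta(z+\omega,L)=\zeta(z,L)+\eta(\omega,L)$ --- which holds for all $\omega\in L$ because $\omega\mapsto\zeta(z+\omega,L)-\zeta(z,L)$ is additive on $L$ and coincides with the $\mathbb{R}$-linear definition of $\eta(\cdot,L)$ on $\mathbb{Z}^2$ --- one gets $F'/F\equiv\eta(\omega,L)$, hence $F(z)=C(\omega)\exp(\eta(\omega,L)z)$ with $C(\omega)\in\mathbb{C}^{\times}$. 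What remains is to show $C(\omega)=g(\omega):=\psi(\omega,L)\exp(\tfrac{1}{2}\eta(\omega,L)\omega)$.

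I would obtain this in two steps. First, splitting the quotient $\sigma(z+\omega_1+\omega_2,L)/\sigma(z,L)$ into two quotients of the same shape and using additivity of $\eta(\cdot,L)$ yields the cocycle identity $C(\omega_1+\omega_2)=C(\omega_1)C(\omega_2)\exp(\eta(\omega_2,L)\omega_1)$; a direct computation --- expanding $\psi$ via Lemma~\ref{transform}(ii) and invoking the Legendre relation (Lemma~\ref{Legendre}) in the form $\eta(\omega_1,L)\omega_2-\eta(\omega_2,L)\omega_1\in2\pi i\mathbb{Z}$ --- shows $g$ satisfies the same identity, so $D:=C/g$ is a homomorphism $L\to\mathbb{C}^{\times}$. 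Second, for $\omega\in L-2L$ we have $\omega/2\notin L$, hence $\sigma(\pm\omega/2,L)\neq0$, and evaluating $F$ at $z=-\omega/2$ together with $\sigma(-z,L)=-\sigma(z,L)$ gives $C(\omega)=-\exp(\tfrac{1}{2}\eta(\omega,L)\omega)=\psi(\omega,L)\exp(\tfrac{1}{2}\eta(\omega,L)\omega)=g(\omega)$. Thus $D\equiv1$ on $L-2L$, and since every element of $L$ is a sum of at most two elements of $L-2L$, the homomorphism $D$ is identically $1$. This proves (i).

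For (ii), let $G(\tau,z)$ be the right-hand side. First I would note that $G(\tau,\cdot)$ is holomorphic on $\mathbb{C}$: the product converges absolutely and locally uniformly since $\lvert q\rvert<1$, and $\rho(\tau,z)$ is entire and nowhere zero, so the divisor of $G(\tau,\cdot)$ is that of $(1-\zeta)\prod_{n\geq1}(1-q^n\zeta)(1-q^n\zeta^{-1})$, namely simple zeros at the points of $[\tau,1]$ --- the same divisor as $\sigma(\tau,\cdot)$. Next I would verify $G$ has the same quasi-periods as $\sigma$. Under $z\mapsto z+1$ one has $\zeta\mapsto\zeta$, so only $\rho$ changes, with $\rho(\tau,z+1)=-\rho(\tau,z)\exp(\eta(1,[\tau,1])(z+\tfrac{1}{2}))$, producing the factor $\psi(1,[\tau,1])\exp(\eta(1,[\tau,1])(z+\tfrac{1}{2}))$. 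Under $z\mapsto z+\tau$ one has $\zeta\mapsto q\zeta$; reindexing the three products shows the $(1-\zeta)$-and-product part is multiplied by $-\zeta^{-1}$, while $\rho(\tau,z+\tau)=\rho(\tau,z)\,\zeta\,\exp(\eta(\tau,[\tau,1])(z+\tfrac{\tau}{2}))$ after substituting $\tau\,\eta(1,[\tau,1])=2\pi i+\eta(\tau,[\tau,1])$ from Lemma~\ref{Legendre}; multiplying these gives the factor $\psi(\tau,[\tau,1])\exp(\eta(\tau,[\tau,1])(z+\tfrac{\tau}{2}))$. By (i) these coincide with the transformation factors of $\sigma(\tau,z)$.

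Consequently $h(\tau,z):=\sigma(\tau,z)/G(\tau,z)$ is holomorphic (the zeros cancel), nowhere zero, and invariant under $z\mapsto z+1$ and $z\mapsto z+\tau$; a holomorphic function with these two periods is bounded, hence constant in $z$. To pin the constant down, let $z\to0$: then $\rho(\tau,0)=1$, $(1-\zeta)\sim-2\pi iz$, and the product tends to $1$, so $G(\tau,z)/z\to1$, matching $\sigma(\tau,z)/z\to1$. Hence $\sigma(\tau,z)=G(\tau,z)$, which is (ii). I expect the only genuine obstacle to be the bookkeeping in the $z\mapsto z+\tau$ step: one must reindex the three interlocking products carefully, recognize that the combined factor collapses exactly to $-\zeta^{-1}$, and feed in the Legendre relation at precisely the point where $\tau\,\eta(1,[\tau,1])$ appears in $\rho(\tau,z+\tau)$; the rest is routine.
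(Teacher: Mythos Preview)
Your argument is correct. The paper does not actually give a proof of this lemma; it merely cites Lang (pp.~241, 247) and Silverman (pp.~44, 53). What you have written is essentially the standard derivation found in those references: for (i), integrate $\zeta(z+\omega)-\zeta(z)=\eta(\omega)$ to get $\sigma(z+\omega)/\sigma(z)=C(\omega)e^{\eta(\omega)z}$, then pin down $C(\omega)$ by the oddness of $\sigma$ at $z=-\omega/2$ for $\omega\in L\setminus 2L$ together with a cocycle/homomorphism extension to all of $L$; for (ii), compare divisors and quasi-periods (via Lemma~\ref{Legendre}) to see that the ratio is an elliptic holomorphic function, hence constant, and normalize at $z=0$. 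So your proof is not a different route from the paper's---it is simply the proof the paper chose to outsource, supplied in full.

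One minor remark on presentation: in your cocycle step for $g$, the verification that $g(\omega_1+\omega_2)=g(\omega_1)g(\omega_2)\exp(\eta(\omega_2)\omega_1)$ hinges on the identity $\psi(\omega_1+\omega_2)\psi(\omega_1)^{-1}\psi(\omega_2)^{-1}=\exp\bigl(\tfrac{1}{2}(\eta(\omega_2)\omega_1-\eta(\omega_1)\omega_2)\bigr)$, which follows from Lemma~\ref{transform}(ii) and the computation $\eta(\omega_2)\omega_1-\eta(\omega_1)\omega_2\in 2\pi i\mathbb{Z}$ for $\omega_1,\omega_2\in L$ (a consequence of bilinearity and the Legendre relation). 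You allude to this but do not spell it out; since it is the only place where the sign bookkeeping could go wrong, it would be worth stating the intermediate identity explicitly.
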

\begin{proof}
See \cite{Lang} p. 241, p. 247 or \cite{Silverman} p. 44, p. 53.
\end{proof}

\begin{theorem}
The function $\sigma(\tau,z)$ belongs to $J_{-1}(\Gamma_1)$.
\end{theorem}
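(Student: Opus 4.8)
The plan is to verify the three defining conditions for $\sigma(\tau,z)\in J_{-1}(\Gamma_1)$ one by one, feeding in the classical transformation and product formulas for the Weierstrass $\sigma$-function collected in Lemmas~\ref{transform} and \ref{sigmafunction}. Holomorphy of $\sigma(\tau,z)$ on $\mathfrak{H}\times\mathbb{C}$ needs no work: by Lemma~\ref{sigmafunction}(ii) it equals $-\tfrac{1}{2\pi i}$ times $\rho(\tau,z)$ (already noted to be holomorphic) and an infinite product converging locally uniformly for $|q|<1$. Conditions (i) and (ii) will drop out of the homogeneity and quasi-periodicity of $\sigma$, and condition (iii) out of the product expansion; the only mildly delicate point is the triangular-number estimate hidden in (iii).

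For condition (i), take $M=\begin{pmatrix}a&b\\c&d\end{pmatrix}\in\Gamma_1$ and unwind $(\sigma|'_{-1}M)(\tau,z)=(\sigma|_{-1,0}M)(\tau,z)=(c\tau+d)\,\sigma\bigl(\tfrac{z}{c\tau+d},[\tfrac{a\tau+b}{c\tau+d},1]\bigr)$. Since $M\in\Gamma_1$ we have $[a\tau+b,c\tau+d]=[\tau,1]$, so the lattice here is $(c\tau+d)^{-1}[\tau,1]$; homogeneity of degree $1$ (Lemma~\ref{transform}(i)) then turns $\sigma\bigl(\tfrac{z}{c\tau+d},(c\tau+d)^{-1}[\tau,1]\bigr)$ into $(c\tau+d)^{-1}\sigma(z,[\tau,1])$, and the factors $(c\tau+d)^{\pm1}$ cancel, leaving $\sigma(\tau,z)$. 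For condition (ii), fix $X=(\begin{matrix}\lambda&\mu\end{matrix})\in\mathbb{Z}^2$ and put $\omega=\lambda\tau+\mu\in[\tau,1]$. Because the index is $0$, $(\sigma|_0 X)(\tau,z)=\sigma(z+\omega,[\tau,1])$, which Lemma~\ref{sigmafunction}(i) rewrites as $\psi(\omega,[\tau,1])\exp\bigl(\eta(\omega,[\tau,1])(z+\tfrac12\omega)\bigr)\,\sigma(\tau,z)$. Since $\psi(\omega,[\tau,1])$ is exactly the quantity denoted $\psi(X)$, the prefactor $\bigl(\psi(X)\exp(\eta(\omega,[\tau,1])(z+\tfrac12\omega))\bigr)^{-1}$ in the definition of $|''_{-1}$ cancels it precisely, so $\sigma|''_{-1}X=\sigma$.

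For condition (iii), note that (i) already gives $\sigma|'_{-1}M=\sigma$ for every $M\in\Gamma_1$, so it suffices to analyze $\rho^{-1}\sigma$ once. By Lemma~\ref{sigmafunction}(ii),
\[
\rho^{-1}\sigma=-\frac{1}{2\pi i}\,(1-\zeta)\prod_{n\ge1}\frac{(1-q^n\zeta)(1-q^n\zeta^{-1})}{(1-q^n)^2},
\]
which manifestly has the form $\sum_{n\ge0}\sum_r c(n,r)q^n\zeta^r$; only the $\zeta$-range must be controlled. Expanding the product, a monomial carrying $\zeta^r$ with $r\ge1$ must acquire its positive $\zeta$-degree from the factor $1-\zeta$ together with distinct factors $1-q^m\zeta$, so its $q$-degree is at least $1+2+\cdots+(r-1)=\tfrac{(r-1)r}{2}$, while the remaining factors contribute only non-negative powers of $q$ and $\zeta$-powers $\le0$; a symmetric count handles $r\le-1$. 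Hence $c(n,r)\ne0$ forces $n\ge\tfrac{|r|(|r|-1)}{2}$, i.e. $|r|\le\tfrac{1+\sqrt{1+8n}}{2}$, and one may take $r_0(n):=\bigl\lfloor\tfrac{1+\sqrt{1+8n}}{2}\bigr\rfloor$ (or, if strict monotonicity is required, any strictly increasing majorant, still of size $O(\sqrt n)$), whence $r_0(n)/n\to0$. This verifies (iii) and shows $\sigma\in J_{-1}(\Gamma_1)$. I foresee no genuine obstacle beyond this last bookkeeping step: the statement is essentially a repackaging of the classical homogeneity, quasi-periodicity, and product expansion of the Weierstrass $\sigma$-function.
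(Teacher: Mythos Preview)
Your proof is correct and follows the same route as the paper: holomorphy from the product expansion, condition (i) from homogeneity of $\sigma$ together with $[a\tau+b,c\tau+d]=[\tau,1]$, condition (ii) from the quasi-periodicity in Lemma~\ref{sigmafunction}(i), and condition (iii) by reducing via (i) to $\rho^{-1}\sigma$ and reading off the product in Lemma~\ref{sigmafunction}(ii). Your triangular-number bound $n\ge |r|(|r|-1)/2$ is in fact a more explicit version of what the paper does---it simply appeals to the multiplicative argument of Proposition~\ref{graded} and leaves the details to the reader---so the two proofs agree in substance.
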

\begin{proof}
First, we note that $\sigma(\tau,z)$ is a holomorphic function by
Lemma \ref{sigmafunction}(ii).
\par
Let $M=\begin{pmatrix}a&b\\c&d\end{pmatrix}\in\Gamma_1$. Then we
derive
\begin{eqnarray*}
(\sigma|'_{-1}M)(\tau,z)&=&(c\tau+d)\sigma\bigg(\frac{z}{c\tau+d},\bigg[\frac{a\tau+b}{c\tau+d},1\bigg]\bigg)\\
&=&\sigma(z,[a\tau+b,c\tau+d])~\textrm{by Lemma
\ref{transform}(i)}\\
&=&\sigma(\tau,z)~\textrm{by the fact $[a\tau+b,c\tau+d]=[\tau,1]$}.
\end{eqnarray*}
\par
And, for $X=(\begin{matrix}\lambda &\mu\end{matrix})\in\mathbb{Z}^2$
we achieve that
\begin{eqnarray*}
(\sigma|''_{-1}X)(\tau,z)&=&\bigg(\psi(\begin{matrix}\lambda&\mu\end{matrix})
\exp(\eta(\lambda\tau+\mu,[\tau,1])(z+\tfrac{1}{2}(\lambda\tau+\mu)))\bigg)^{-1}
\sigma(\tau,z+\lambda\tau+\mu)\\
&=&\sigma(\tau,z)~\textrm{by Lemma \ref{sigmafunction}(i)}.
\end{eqnarray*}
\par
Finally, let $M\in\Gamma_1$. Then we have
\begin{eqnarray*}
\rho(\tau,z)^{-1}(\sigma|'_{-1}M)(\tau,z)&=&\rho(\tau,z)^{-1}\sigma(\tau,z)~\textrm{by
the first part of the proof}\\
&=&-\frac{1}{2\pi i}(1-\zeta)\prod_{n=1}^\infty
\frac{(1-q^n\zeta)(1-q^{n}\zeta^{-1})}{(1-q^n)^2}~\textrm{by Lemma
\ref{sigmafunction}(ii)}.
\end{eqnarray*}
Hence it is easy to check that $\sigma(\tau,z)$ satisfies the third
condition for a modified Jacobi form by adopting the argument in the
proof of Proposition \ref{graded}. Therefore $\sigma(\tau,z)$
belongs to $J_{-1}(\Gamma_1)$.
\end{proof}

\begin{remark}\label{Klein}
It follows that if $(\begin{matrix}\lambda &\mu\end{matrix})\in
N^{-1}\mathbb{Z}^2$ for some integer $N\geq1$, then the function
\begin{equation*}
\mathfrak{k}_{(\begin{matrix}\lambda &\mu\end{matrix})}(\tau):=
(\sigma|''_{-1}\left[\begin{matrix}\lambda
&\mu\end{matrix}\right])(\tau,0)=
\psi(\begin{matrix}\lambda&\mu\end{matrix})\exp(-\tfrac{1}{2}\eta(\lambda\tau+\mu,[\tau,1])(\lambda\tau+\mu))
\sigma(\tau,\lambda\tau+\mu)
\end{equation*}
is a nearly holomorphic modular form of weight $-1$ on
$\Gamma(2N^2)$ by Theorem \ref{main}. This function is called a
\textit{Klein form} (indexed by $(\begin{matrix}\lambda
&\mu\end{matrix})$) whose infinite product expansion is
\begin{equation*}
\mathfrak{k}_{(\begin{matrix}\lambda
&\mu\end{matrix})}(\tau)=e(\mu(\lambda-1)/2)q^{\lambda(\lambda-1)/2}
(1-e(\mu)q^\lambda) \prod_{n=1}^\infty\frac{(1-e(\mu)q^{n+\lambda})
(1-e(-\mu)q^{n-\lambda})}{(1-q^n)^2}
\end{equation*}
if $(\begin{matrix}\lambda&\mu\end{matrix})\not\in\mathbb{Z}^2$ (and
identically zero otherwise). The modularity of a product of finitely
many Klein forms on $\Gamma(N)$ are intensively studied by Kubert
and Lang (\cite{K-L}). On the other hand, in a recent paper
\cite{E-K-S} the authors present a sufficient condition for a
product of Klein forms to be a nearly holomorphic modular form on
$\Gamma_1(N)$.
\end{remark}

\section {Finite-dimensional subspaces}

In this section we shall consider a family of finite-dimensional
subspaces of $J_k(\Gamma_1)$.

\begin{definition}
Let $k$ be an integer. For each integer $m>0$ we let $J_k^m$ be the
subspace of $J_k$ ($:=J_k(\Gamma_1)$) consisting of $\phi$ for which
$\rho^k\phi=\sum_{n\geq0}\sum_{|r|\leq r_0(n)}c(n,r)q^n\zeta^r$ and
\begin{equation}\label{condition}
\min\{n-r_0(n)~:~n\geq0\}+\frac{k}{8}\geq-m.
\end{equation}
\end{definition}

\begin{remark}
\begin{itemize}
\item[(i)]
Note that since
\begin{equation*}
n-r_0(n)=n\bigg(1-\frac{r_0(n)}{n}\bigg)\rightarrow\infty~\textrm{as}~n\rightarrow\infty,
\end{equation*}
the minimum of $n-r_0(n)$ exists. Thus we have a filtration
$J_k^1\subseteq J_k^2\subseteq\cdots$ and $J_k=\cup_{m=1}^\infty
J_k^m$.
\item[(ii)]
Since $\rho^{-1}\sigma$ has the following Fourier development
\begin{eqnarray*}
-\frac{1}{2\pi
i}(1-\zeta)\prod_{n=1}^\infty\frac{(1-q^n\zeta)(1-q^n\zeta^{-1})}{(1-q^n)^2}
&=&-\frac{1}{2\pi i}\bigg\{
(1-\zeta)+(-\zeta^{-1}+3-3\zeta+\zeta^2)q\\
&&\hspace{0.8cm}+(-3\zeta^{-1}+9-9\zeta+3\zeta^2)q^2\\
&&\hspace{0.8cm}+(\zeta^{-2}-9\zeta^{-1}+22-22\zeta+9\zeta^2-\zeta^3)q^3\\
&&\hspace{0.8cm}+(51-51\zeta-22\zeta^{-1}+22\zeta^2+3\zeta^2-3\zeta^3)q^4\\
&&\hspace{0.8cm}+(9\zeta^{-2}-51\zeta^{-1}+108-108\zeta+51\zeta^2-9\zeta^3)q^5+\cdots
\bigg\},
\end{eqnarray*}
one can readily check that the Weierstrass $\sigma$-function in fact
lies in $J_{-1}^2$.
\end{itemize}
\end{remark}

\begin{proposition}
Let $\phi\in J_k$ for some integer $k$ with
$\rho^k\phi=\sum_{n,r}c(n,r)q^n\zeta^r$. Then we have
\begin{equation}\label{coeffs}
c(n,r)=(-1)^{\lambda
k}c(n-r\lambda-\tfrac{1}{2}k(\lambda^2+\lambda), r+\lambda k)
\end{equation}
for all integers $n$, $r$ and $\lambda$.
\end{proposition}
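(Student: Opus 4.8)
The plan is to read the relation (\ref{coeffs}) directly off condition (ii) in the definition of a modified Jacobi form, specialized to the horizontal translations $X=(\begin{matrix}\lambda & 0\end{matrix})$ with $\lambda\in\mathbb{Z}$, after converting the transformation law for $\phi$ into one for $F:=\rho^k\phi$. The starting point is that, for $X=(\begin{matrix}\lambda&\mu\end{matrix})\in\mathbb{Z}^2$, the condition $\phi|''_k X=\phi$ reads
\[
\phi(\tau,z)=\Big(\psi(\begin{matrix}\lambda&\mu\end{matrix})\exp\big(\eta(\lambda\tau+\mu,[\tau,1])(z+\tfrac{1}{2}(\lambda\tau+\mu))\big)\Big)^k\phi(\tau,z+\lambda\tau+\mu);
\]
multiplying through by $\rho(\tau,z)^k$ and writing $\phi=\rho^{-k}F$ on the right turns this into
\[
F(\tau,z)=\Big(\psi(\begin{matrix}\lambda&\mu\end{matrix})\exp\big(\eta(\lambda\tau+\mu,[\tau,1])(z+\tfrac{1}{2}(\lambda\tau+\mu))\big)\rho(\tau,z)\rho(\tau,z+\lambda\tau+\mu)^{-1}\Big)^k F(\tau,z+\lambda\tau+\mu).
\]
By Lemma \ref{Legendre2} the inner factor equals $\psi(\begin{matrix}\lambda&\mu\end{matrix})e(\tfrac{1}{2}\mu(1-\lambda))q^{\lambda(1-\lambda)/2}\zeta^{-\lambda}$, so $F$ satisfies the clean functional equation
\[
F(\tau,z)=\Big(\psi(\begin{matrix}\lambda&\mu\end{matrix})e(\tfrac{1}{2}\mu(1-\lambda))q^{\lambda(1-\lambda)/2}\zeta^{-\lambda}\Big)^k F(\tau,z+\lambda\tau+\mu).
\]

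Next I would set $\mu=0$ and let $\lambda$ range over $\mathbb{Z}$. Then $\psi(\begin{matrix}\lambda&0\end{matrix})=(-1)^{\lambda}$ by Lemma \ref{transform}(ii) and $e(0)=1$, so
\[
F(\tau,z)=(-1)^{\lambda k}q^{k\lambda(1-\lambda)/2}\zeta^{-\lambda k}F(\tau,z+\lambda\tau).
\]
Substituting $F(\tau,z)=\sum_{n,r}c(n,r)q^n\zeta^r$ and using $e\big((z+\lambda\tau)r\big)=q^{\lambda r}\zeta^r$, the right-hand side becomes $(-1)^{\lambda k}\sum_{n,r}c(n,r)q^{\,n+\lambda r+k\lambda(1-\lambda)/2}\zeta^{\,r-\lambda k}$. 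Comparing coefficients of $q^n\zeta^r$ on the two sides — with the convention, already in force in the definition, that $c(n,r)=0$ whenever $|r|>r_0(n)$ (in particular for $n<0$), so the comparison is meaningful for all integers $n,r$ — the unique term on the right producing $q^n\zeta^r$ has $r+\lambda k$ in place of $r$ and $n-\lambda(r+\lambda k)-k\lambda(1-\lambda)/2$ in place of $n$. Since $n-\lambda(r+\lambda k)-k\lambda(1-\lambda)/2=n-r\lambda-\tfrac{1}{2}k(\lambda^2+\lambda)$, this is exactly (\ref{coeffs}), valid for every $\lambda\in\mathbb{Z}$.

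I do not anticipate a genuine obstacle; the proof is a direct unwinding of condition (ii) with the help of Lemmas \ref{transform} and \ref{Legendre2}. The closest thing to a subtlety is the bookkeeping: one should note that $\tfrac{1}{2}k(\lambda^2+\lambda)$ and $k\lambda(1-\lambda)/2$ are integers, which is clear because $\lambda(\lambda+1)$ and $\lambda(1-\lambda)$ are always even, and that term-by-term comparison of Fourier expansions is legitimate here because the two sides of the functional equation are literally the same holomorphic function on $\mathfrak{H}\times\mathbb{C}$, whose $(q,\zeta)$-expansion is unique.
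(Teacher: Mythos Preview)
Your proof is correct and follows essentially the same route as the paper: use condition (ii) for a modified Jacobi form, multiply through by $\rho^k$ and invoke Lemma~\ref{Legendre2} to obtain a monomial-factor functional equation for $F=\rho^k\phi$, then compare Fourier coefficients. The only cosmetic difference is that you specialize to $\mu=0$ from the start, whereas the paper keeps a general $(\lambda,\mu)\in\mathbb{Z}^2$ and then uses Lemma~\ref{transform}(ii) to collapse the $\mu$-dependent factor $\big(\psi(\begin{matrix}\lambda&\mu\end{matrix})e(\tfrac{1}{2}\mu(1-\lambda))\big)^k$ to $(-1)^{\lambda k}$; either way the $\mu$-dependence is trivial (the remaining $e(\mu r)$ factors are~$1$ for integer $\mu,r$), so nothing is gained or lost.
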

\begin{proof}
If $(\begin{matrix}\lambda &\mu\end{matrix})\in\mathbb{Z}^2$, then
we derive that
\begin{eqnarray*}
&&\sum_{n,r}c(n,r)q^n\zeta^r=\rho^k\phi(\tau,z)
=\rho^k(\phi|''_k\left[\begin{matrix}\lambda&\mu\end{matrix}\right])\\
&=&\rho(\tau,z)^k
\bigg(\psi(\begin{matrix}\lambda&\mu\end{matrix})\exp(\eta(\lambda\tau+\mu,[\tau,1])(z+\tfrac{1}{2}(\lambda\tau+\mu)))\bigg)^k
\phi(\tau,z+\lambda\tau+\mu)\\
&=&\rho(\tau,z)^k
\bigg(\psi(\begin{matrix}\lambda&\mu\end{matrix})\exp(\eta(\lambda\tau+\mu,[\tau,1])(z+\tfrac{1}{2}(\lambda\tau+\mu)))\bigg)^k
\rho(\tau,z+\lambda\tau+\mu)^{-k}(\rho^k\phi)(\tau,z+\lambda\tau+\mu)\\
&=&\bigg(\psi(\begin{matrix}\lambda&\mu\end{matrix})e(\tfrac{1}{2}\mu(1-\lambda))
q^{\lambda(1-\lambda)/2}\zeta^{-\lambda}\bigg)^k\sum_{n,r}c(n,r)q^{n+r\lambda}\zeta^r
~\textrm{by Lemma \ref{Legendre2}}\\
&=&(-1)^{\lambda
k}\sum_{n,r}c(n,r)q^{n+r\lambda+k\lambda(1-\lambda)/2}\zeta^{r-\lambda
k}~\textrm{by Lemma \ref{transform}(ii)}\\
&=&(-1)^{\lambda
k}\sum_{n',r'}c(n'-r'\lambda-\tfrac{1}{2}k(\lambda^2+\lambda),r'+\lambda
k)q^{n'}\zeta^{r'}\\
&&\textrm{by letting $r':=r-\lambda k$ and
$n':=n+r'\lambda+\tfrac{1}{2}k(\lambda^2+\lambda)$}.
\end{eqnarray*}
We get the assertion by comparing the coefficients of Fourier
developments.
\end{proof}

\begin{remark}\label{relationrk}
If we put $\lambda=1$ in (\ref{coeffs}), then we have
\begin{equation}\label{relation}
c(n,r)=(-1)^kc(n-r-k,r+k).
\end{equation}
\end{remark}

Let $\phi\in J_k$ for some integer $k$ with
$\rho^k\phi=\sum_{n,r}c(n,r)q^n\zeta^r$. Then, for
$X=(\begin{matrix}u&v\end{matrix})\in\mathbb{Q}^2$ we defined in
Theorem \ref{main}
\begin{eqnarray}
&&\phi_X(\tau):=(\phi|''_k X)(\tau,0)\nonumber\\
&=&\bigg\{\bigg(\psi(\begin{matrix}u&v\end{matrix})\exp
(\eta(u\tau+v,[\tau,1])(z+\tfrac{1}{2}(u\tau+v)))\bigg)^k
\rho(\tau,z+u\tau+v)^{-k}(\rho^{k}\phi)(\tau,
z+u\tau+v)\bigg\}(\tau,0)\nonumber\\
&=&\bigg\{
\bigg(\psi(\begin{matrix}u&v\end{matrix})e(\tfrac{1}{2}v(1-u))q^{u(1-u)/2}\zeta^{-u}\bigg)^k
\sum_{n,r}c(n,r)e(vr)q^{n+ur}\zeta^r\bigg\}
(\tau,0)~\textrm{by Lemma \ref{Legendre2}}\nonumber\\
&=&\bigg(\psi(\begin{matrix}u
&v\end{matrix})e(\tfrac{1}{2}v(1-u))\bigg)^k\sum_{n,r} c(n,r)e(v
r)q^{n+ur+ku(1-u)/2}.\label{expression}
\end{eqnarray}

\begin{proposition}\label{01}
Let $X,~Y\in\mathbb{Q}^2$ with $X\equiv Y\pmod{\mathbb{Z}^2}$. Then
$\phi_X(\tau)=\xi\phi_Y(\tau)$ for some root of unity $\xi$.
\end{proposition}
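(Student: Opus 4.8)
Write $X = (\begin{matrix}u&v\end{matrix})$ and $Y = (\begin{matrix}u'&v'\end{matrix})$ with $X - Y = (\begin{matrix}\lambda&\mu\end{matrix}) =: Z \in \mathbb{Z}^2$. The strategy is to compare $\phi_X$ and $\phi_Y$ by writing $X = Y + Z$ and applying Proposition \ref{split}(i), which controls how $|''_k$ composes: since $\phi|''_k Z = \phi$ for $Z \in \mathbb{Z}^2$ (the second condition for a modified Jacobi form), we get
\begin{equation*}
\phi|''_k X = \phi|''_k(Y+Z) = (\phi|''_k Z)|''_k Y \cdot (\text{correction factor})^{-1} = \phi|''_k Y \cdot (\text{correction factor})^{-1},
\end{equation*}
where the correction factor is the quantity $\bigl(\psi(Z)\psi(Y)\psi(Y+Z)\,e(\tfrac12\det\begin{pmatrix}Y\\Z\end{pmatrix})\bigr)^k$ coming from Proposition \ref{split}(i) (one must be careful which of $Y$, $Z$ plays the role of $X$ versus $X'$ there, and order the composition accordingly). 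Evaluating at $z=0$ then gives $\phi_X(\tau) = \xi \phi_Y(\tau)$ with
\begin{equation*}
\xi = \Bigl(\psi(Z)\,\psi(Y)\,\psi(Y+Z)\,e\bigl(\tfrac12\det\begin{pmatrix}Y\\Z\end{pmatrix}\bigr)\Bigr)^{\pm k}.
\end{equation*}

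**Why $\xi$ is a root of unity.** Each $\psi$-value lies in $\{\pm1\}$ by definition, so the product of the three $\psi$-factors is $\pm1$. The determinant $\det\begin{pmatrix}Y\\Z\end{pmatrix} = v'\lambda - u'\mu$ is rational (since $Y \in \mathbb{Q}^2$ and $Z \in \mathbb{Z}^2$), so $e(\tfrac12(v'\lambda - u'\mu))$ is a root of unity. Raising a root of unity to the integer power $\pm k$ keeps it a root of unity; hence $\xi$ is a root of unity. This is essentially immediate once the identity from the previous paragraph is in place.

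**The main obstacle.** The only real care needed is the bookkeeping in applying Proposition \ref{split}(i): that proposition is not symmetric in $X$ and $X'$ — the determinant appears as $\det\begin{pmatrix}X'\\X\end{pmatrix}$ with a specific sign — so I must fix once and for all whether to write $X = Y + Z$ and compute $(\phi|''_k Z)|''_k Y$ or $(\phi|''_k Y)|''_k Z$, and track the resulting sign in the exponent of $e$. In either ordering one of the two $|''_k$-slashes is by an integer vector and collapses via the second modified-Jacobi-form condition, leaving a clean scalar. A secondary (and genuinely minor) point is that evaluating at $z=0$ commutes with everything here because the correction factor in Proposition \ref{split}(i) is independent of $z$; so passing from the functional identity $\phi|''_k X = \xi'\,\phi|''_k Y$ to $\phi_X(\tau) = \xi'\,\phi_Y(\tau)$ is automatic. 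Alternatively, one can bypass the slash calculus entirely and read off $\xi$ directly from the explicit Fourier expansion \eqref{expression}: substituting $u = u' + \lambda$, $v = v' + \mu$ and using the coefficient relation \eqref{coeffs} to re-index the sum produces the same scalar; I would present whichever is shorter, but the Proposition \ref{split}(i) route is cleanest and I would lead with it.
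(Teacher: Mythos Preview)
Your proposal is correct, and it takes a genuinely different route from the paper. The paper argues via the explicit Fourier expansion \eqref{expression}: it reduces to the two generator cases $Y=(\begin{matrix}u&v+1\end{matrix})$ and $Y=(\begin{matrix}u+1&v\end{matrix})$, the first being immediate from \eqref{expression} and the second requiring the re-indexing trick together with the coefficient relation \eqref{relation} from Remark~\ref{relationrk}. By contrast, you work directly with the slash calculus: writing $X=Z+Y$ with $Z\in\mathbb{Z}^2$ and invoking Proposition~\ref{split}(i) together with the axiom $\phi|''_kZ=\phi$ gives $\phi|''_kX=\xi\,\phi|''_kY$ in a single stroke for arbitrary $Z$, with $\xi=\bigl(\psi(Z)\psi(Y)\psi(X)\,e(\tfrac12\det\begin{pmatrix}Y\\Z\end{pmatrix})\bigr)^{-k}$; evaluating at $z=0$ finishes. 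Your approach is shorter and more conceptual, and it makes the root-of-unity factor explicit without any Fourier bookkeeping; the paper's approach, on the other hand, is what you describe as your alternative, and it has the minor advantage of not depending on Proposition~\ref{split}(i) (whose proof already absorbed the Legendre-relation computation). Two small cleanups: the exponent on $\xi$ is $-k$, not $\pm k$ (once you fix the order $(\phi|''_kZ)|''_kY$ as you did), and your stated determinant $v'\lambda-u'\mu$ has the wrong sign (it should be $u'\mu-v'\lambda$), though of course this is irrelevant to the conclusion.
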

\begin{proof}
If $X=(\begin{matrix}u&v\end{matrix})$ and $Y=(\begin{matrix}
u&v+1\end{matrix})$, then one can readily get from the expression
(\ref{expression}) that $\phi_X(\tau)=\xi\phi_Y(\tau)$ for some root
of unity $\xi$.
\par
Now, let $X=(\begin{matrix}u&v\end{matrix})$ and
$Y=(\begin{matrix}u+1&v\end{matrix})$. We obtain from the expression
(\ref{expression}) that
\begin{eqnarray*}
\phi_Y(\tau)&=& \bigg(\psi(\begin{matrix}u+1
&v\end{matrix})e(-\tfrac{1}{2}vu)\bigg)^k\sum_{n,r} c(n,r)e(v
r)q^{n+(u+1)r-k(u+1)u/2}\\
&=&\bigg(\psi(\begin{matrix}u+1&v\end{matrix})e(-\tfrac{1}{2}vu)\bigg)^k
\sum_{n',r'}c(n'-r'-k,r'+k)e(vr')e(vk)q^{n'+ur'+ku(1-u)/2}\\
&&\textrm{by letting $r':=r-k$ and $n':=n+r'+k$}\\
&=&\bigg(\psi(\begin{matrix}u+1&v\end{matrix})e(-\tfrac{1}{2}vu)\bigg)^k
e(vk)(-1)^k\sum_{n',r'}c(n',r')e(vr')q^{n'+ur'+ku(1-u)/2}
~\textrm{by Remark \ref{relationrk}}\\
&=&\xi\phi_X(\tau)~\textrm{for some root of unity $\xi$}.
\end{eqnarray*}
This proves the proposition.
\end{proof}

Now we are ready to prove our main theorem about dimension.

\begin{theorem}\label{main2}
Let $k<0$ and $m>0$ be integers. Then $J_k^m$ is finite-dimensional.
\end{theorem}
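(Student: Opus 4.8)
The plan is to embed $J_k^m$ into a finite direct sum of finite-dimensional spaces by means of the specialization maps $\phi\mapsto\phi_X$ of Theorem \ref{main}. Fix once and for all an integer $N$ with $N^2>-k$, and consider the $\mathbb{C}$-linear map that sends $\phi$ to the tuple $(\phi_X)_X$, where $X$ runs over a set of representatives of $(\tfrac1N\mathbb{Z})^2/\mathbb{Z}^2$; by Proposition \ref{01} the function $\phi_X$ for an arbitrary $X\in\tfrac1N\mathbb{Z}^2$ is then determined up to a root of unity, so the particular representatives are immaterial. I will show that this map is injective on $J_k^m$ and that its image lies in a fixed finite-dimensional space, which gives $\dim_{\mathbb{C}}J_k^m<\infty$.

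For injectivity, suppose $\phi\in J_k^m$ satisfies $\phi_X=0$ for every $X=(\begin{matrix}\lambda&\mu\end{matrix})\in\tfrac1N\mathbb{Z}^2$. By the definition of $|''_k$ one has
\[
\phi_X(\tau)=\Bigl(\psi(\begin{matrix}\lambda&\mu\end{matrix})\exp\bigl(\tfrac12\eta(\lambda\tau+\mu,[\tau,1])(\lambda\tau+\mu)\bigr)\Bigr)^{k}\,\phi(\tau,\lambda\tau+\mu),
\]
and the prefactor never vanishes, so $\phi(\tau,\lambda\tau+\mu)=0$ for all $\tau\in\mathfrak{H}$ and all $(\begin{matrix}\lambda&\mu\end{matrix})\in\tfrac1N\mathbb{Z}^2$. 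Fix $\tau_0\in\mathfrak{H}$; then the holomorphic function $z\mapsto\phi(\tau_0,z)$ vanishes on all of $\tfrac1N[\tau_0,1]$, which meets any fundamental domain for $\mathbb{C}/[\tau_0,1]$ in $N^2$ distinct points. If $\phi(\tau_0,\cdot)$ were not identically zero, Proposition \ref{zero} would force it to have exactly $-k<N^2$ zeros in such a fundamental domain, a contradiction. Hence $\phi(\tau_0,\cdot)\equiv0$ for all $\tau_0$, i.e.\ $\phi=0$.

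For finiteness of the image, recall from Theorem \ref{main} (and the explicit description in its proof) that every $\phi_X$ with $X\in\tfrac1N\mathbb{Z}^2$ is a nearly holomorphic modular form of weight $k$ on $\Gamma_*:=\Gamma(2N^2/\gcd(N,k))$, so it suffices to bound its pole orders at the cusps. As established in the proof of Theorem \ref{main}, $(c\tau+d)^{-k}\phi_X\bigl(\tfrac{a\tau+b}{c\tau+d}\bigr)=\phi_{X\sigma}(\tau)$ for every $\sigma=\begin{pmatrix}a&b\\c&d\end{pmatrix}\in\Gamma_1$, and $X\sigma\in\tfrac1N\mathbb{Z}^2$. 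Reducing $X\sigma$ modulo $\mathbb{Z}^2$ to a vector $Y=(\begin{matrix}u&v\end{matrix})$ with $0\le u<1$ (so that $\phi_{X\sigma}=\xi\,\phi_Y$ for a root of unity $\xi$, by Proposition \ref{01}) and applying the expansion (\ref{expression}) to $\phi_Y$, one sees that every exponent of $q$ occurring in $\phi_{X\sigma}$ has the form $n+ur+\tfrac12ku(1-u)$ with $c(n,r)\ne0$. Since $|r|\le r_0(n)$ together with $0\le u<1$ gives $ur\ge-r_0(n)$, and since $k<0$ gives $\tfrac12ku(1-u)\ge\tfrac k8$, this exponent is at least $(n-r_0(n))+\tfrac k8\ge\mu_0+\tfrac k8\ge-m$ by (\ref{condition}), where $\mu_0:=\min\{n-r_0(n):n\ge0\}$. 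Hence $\phi_X$ has a pole of order at most $m$ at each cusp of $\Gamma_*$, so for a suitable integer $\ell\ge1$ depending only on $k$ and $m$ the product $\Delta^{\ell}\phi_X$ (with $\Delta$ the weight-$12$ discriminant cusp form) is holomorphic on $\mathfrak{H}$ and at every cusp, hence lies in the finite-dimensional space $M_{k+12\ell}(\Gamma_*)$ of holomorphic modular forms. Thus every $\phi_X$ lies in the fixed finite-dimensional space $\Delta^{-\ell}M_{k+12\ell}(\Gamma_*)$.

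Combining the two parts, $J_k^m$ injects into $\bigoplus_X\Delta^{-\ell}M_{k+12\ell}(\Gamma_*)$, a finite-dimensional space, so $J_k^m$ is finite-dimensional. I expect the pole-order bound in the third paragraph to be the main point requiring care: one must check that moving to an arbitrary cusp (replacing $X$ by $X\sigma$) and then reducing modulo $\mathbb{Z}^2$ does not destroy the inequality $\mu_0+k/8\ge-m$, which works precisely because $\mu_0$ is attached to $\phi$ rather than to the chosen cusp and because the reduction can always be normalized so that the first coordinate lies in $[0,1)$ — both facts already visible from (\ref{expression}) and Proposition \ref{01}.
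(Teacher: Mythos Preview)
Your proof is correct and follows essentially the same strategy as the paper's: embed $J_k^m$ into a finite direct sum of spaces of modular forms via the specialization maps $\phi\mapsto\phi_X\Delta^\ell$, using the pole-order estimate coming from (\ref{expression}) together with condition (\ref{condition}) for well-definedness, and Proposition \ref{zero} for injectivity. The only cosmetic difference is in the choice of test points: the paper picks any $-k+1$ rational points $X_j$ in the open unit square and uses that $-k+1>-k$ zeros in a fundamental domain contradicts Proposition \ref{zero}, whereas you take the full $N$-torsion grid with $N^2>-k$; both choices serve the same purpose.
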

\begin{proof}
Pick any $-k+1$ distinct pairs of
$X_j=(\begin{matrix}u_j&v_j\end{matrix})\in\mathbb{Q}^2$ with
$0<u_j,~v_j<1$. Let $\phi\in J_k^m$ with
$\rho^k\phi=\sum_{n,r}c(n,r)q^n\zeta^r$. Then the functions
\begin{equation*}
\phi_{X_j}(\tau)=(\phi|''_k X_j)(\tau,0)\quad(j=1,\cdots,-k+1)
\end{equation*}
are nearly holomorphic modular forms on some congruence subgroups
$\Gamma_j$ depending on $X_j$ by Theorem \ref{main}. If
$M=\begin{pmatrix} a&b\\c&d\end{pmatrix}\in\Gamma_1$, then we have
\begin{eqnarray}
(c\tau+d)^{-k}\phi_{X_j}\bigg(\frac{a\tau+b}{c\tau+d}\bigg)&=&
(\phi|''_k X_j|'_k M)(\tau,0)~\textrm{by the definitions of slash
operators}\nonumber\\
&=&(\phi|'_k M)|''_k(X_jM)(\tau,0)~\textrm{by Proposition \ref{split}(ii)}\nonumber\\
&=&(\phi|''_k(X_jM))(\tau,0)~\textrm{by the first condition for a modified Jacobi form}\nonumber\\
&=&\phi_{X_jM}(\tau).\label{cusp}
\end{eqnarray}
Set $(\begin{matrix}u_j'&v_j'\end{matrix}):=X_jM$. We may assume
that $0\leq u_j',~v_j'<1$ to estimate
$\mathrm{ord}_q\phi_{X_jM}(\tau)$ by Proposition \ref{01}. Note that
from the expression (\ref{expression}) one can deduce
\begin{equation*}
\phi_{X_jM}(\tau)=\xi \sum_{n\geq0} \sum_{|r|\leq r_0(n)}
c(n,r)e(v_j'r)q^{n+u_j'r+ku_j'(1-u_j')/2}
\end{equation*}
for some root of unity $\xi$. It follows that
\begin{eqnarray}
\mathrm{ord}_q\phi_{X_jM}(\tau)&\geq&
\min\{n+u_j'r+ku_j'(1-u_j')/2~:~n\geq0,~|r|\leq r_0(n)\}\nonumber\\
&\geq&\min\{n-r_0(n)+k/8~:~n\geq0\}~\textrm{because $0\leq
u_j',~v_j'<1$}\nonumber\\
&\geq&-m~\textrm{by the condition (\ref{condition}) for $\phi\in
J_k^m$}.\label{inequality}
\end{eqnarray}
\par
On the other hand, we consider a map
\begin{eqnarray*}
g~:~J_k^m&\rightarrow&\bigoplus_{j=1}^{-k+1}M_{k+12m}(\Gamma_j)\nonumber\\
\phi&\mapsto&(\phi_{X_j}(\tau)\Delta(\tau)^m)_{j=1}^{-k+1}
\end{eqnarray*}
where $M_{k+12m}(\Gamma_j)$ is the space of modular forms of weight
$k+12m$ on $\Gamma_j$, and $\Delta(\tau):=(2\pi
i)^{12}q\prod_{n=1}^\infty(1-q^n)^{24}$ is the modular discriminant
function. As is well-known, each $M_{k+12m}(\Gamma_j)$ is of finite
dimension (\cite{Shimura} Theorem 2.23) and $\Delta(\tau)$ is a
modular form of weight $12$ on $\Gamma_1$ which does not vanish on
$\mathfrak{H}$ and has $\mathrm{ord}_q\Delta(\tau)=1$
(\cite{Silverman} Chapter I $\S$3). Hence the identity (\ref{cusp})
and the inequality (\ref{inequality}) imply that $g$ is
well-defined.
\par
If $\phi$ and $\phi'$ are two distinct elements of $J_k^m$, then
there exists a point $\tau_0\in\mathfrak{H}$ such that the function
$(\phi-\phi')(\tau_0,z)$ is not identically zero and has no zeros on
the boundary of the fundamental domain generated by $\tau_0$ and $1$
(for the torus $\mathbb{C}/[\tau_0,1]$). Suppose that $\phi$ and
$\phi'$ have the same image via $g$. Then for every $j$
\begin{equation*}
0=(\phi_{X_j}\Delta^m-\phi'_{X_j}\Delta^m)(\tau_0)
=(\phi_{X_j}-\phi'_{X_j})(\tau_0)\Delta(\tau_0)^m.
\end{equation*}
Since $\Delta(\tau_0)\neq0$, we get
\begin{eqnarray*}
0&=&(\phi_{X_j}-\phi'_{X_j})(\tau_0)=(\phi|''_k X_j-\phi'|''_k
X_j)(\tau_0,0)\\
&=&\bigg(\psi(\begin{matrix}u_j&v_j\end{matrix})
\exp(\eta(u_j\tau_0+v_j,[\tau_0,1])\tfrac{1}{2}(u_j\tau_0+v_j))\bigg)^k
(\phi-\phi')(\tau_0,u_j\tau_0+v_j).
\end{eqnarray*}
This implies $(\phi-\phi')(\tau_0,u_j\tau_0+v_j)=0$. Thus the
function $(\phi-\phi')(\tau_0,z)$ has at least $-k+1$ distinct zeros
in the fundamental domain $F$ generated by $\tau_0$ and $1$. (When
$(\phi-\phi')(\tau_0,z)$ has zeros on the boundary $\partial F$, we
slightly move $F$ into a new domain $F'$ so that the points
$u_j\tau_0+v_j$ still lie inside $F'$ and the boundary $\partial F'$
has no zeros of $(\phi-\phi')(\tau_0,z)$.) But, this contradicts
Proposition \ref{zero}. Therefore, $g$ is injective; hence we obtain
\begin{equation*}
\mathrm{dim}_\mathbb{C}J_k^m\leq\sum_{j=1}^{-k+1}\mathrm{dim}_\mathbb{C}
M_{k+12m}(\Gamma_j)<\infty
\end{equation*}
as desired.
\end{proof}

\bibliographystyle{amsplain}

\end{document}